\title[The Goeritz groups of $(1,1)$-decompositions]
{The Goeritz groups of $(1,1)$-decompositions}
\author{Yuya Koda}
\address{
Department of Mathematics, Hiyoshi Campus, Keio University, 4-1-1, Hiyoshi, Kohoku, Yokohama, 223-8521, Japan~ \slash ~ 
International Institute for Sustainability with Knotted Chiral Meta Matter (WPI-SKCM$^2$), Hiroshima University, 1-3-1 Kagamiyama, Higashi-Hiroshima, 739-8526, Japan}
\email{koda@keio.jp}
\author{Yuki Tanaka}
\address{
Department of Mathematics, Hiroshima University, 1-3-1 Kagamiyama, Higashi-Hiroshima, 739-8526, Japan}
\email{tanakayuki3.15@icloud.com}
\thanks{
Y. K. is supported by JSPS KAKENHI Grant Numbers JP20K03588, 
JP21H00978 and JP23H05437. 
}
\theoremstyle{plain}
\newtheorem*{theorem*}{Theorem}
\newtheorem*{lemma*} {Lemma}
\newtheorem*{corollary*} {Corollary}
\newtheorem*{proposition*}{Proposition}
\newtheorem*{conjecture*}{Conjecture}
\newtheorem{theorem}{Theorem}[section]
\newtheorem{lemma}[theorem]{Lemma}
\newtheorem{corollary}[theorem]{Corollary}
\newtheorem{proposition}[theorem]{Proposition}
\theoremstyle{remark}
\newtheorem*{definition}{Definition}
\newtheorem*{claim*}{Claim}
\theoremstyle{definition}
\newtheoremstyle{citing}
  {}
  {}
  {\itshape}
  {}
  {\bfseries}
  {.}
  {.5em}
  {\thmnote{#3}}
\theoremstyle{citing}
\newcommand{\NN}{\mathbb{N}}
\newcommand{\ZZ}{\mathbb{Z}}
\newcommand{\RR}{\mathbb{R}}
\newcommand{\MCG}{\mathrm{MCG}_+}
\newcommand{\Nbd}{\operatorname{Nbd}}
\newcommand{\Cl}{\operatorname{Cl}}
\def\vtil{\widetilde{V}_1}
\begin{document}

\maketitle

\begin{abstract}
A $(g, n)$-decomposition of a link $L$ in a closed orientable $3$-manifold $M$ is a decomposition of $M$ by a closed orientable surface of genus $g$ 
into two handebodies each intersecting the link $L$ in $n$ trivial arcs. 
The Goeritz group of that decomposition is then defined to be the group of isotopy classes of 
orientation-preserving homeomorphisms of the pair $(M, L)$ that preserve the decomposition. 
We compute the Goeritz groups of all $(1,1)$-decompositions.
\end{abstract}

\vspace{1em}

\begin{small}
\hspace{2em}  \textbf{2020 Mathematics Subject Classification}: 57K20, 57K10


\hspace{2em} 
\textbf{Keywords}: 
knot, bridge decomposition, mapping class group.
\end{small}

\section*{Introduction}

Let $L$ be a link in a closed orientable $3$-manifold $M$. 
We say that $(M, L; \Sigma)$ is a \emph{$(g, n)$-decomposition} of $(M, L)$ for some non-negative integers $g$ and $n$ 
if $(M; \Sigma)$ is a Heegaard splitting of genus $g$ of $M$, that is, $\Sigma$ decomposes $M$ into two 
handlebodies $V_1$ and $V_2$ of genus $g$, and the intersection of $L$ and $V_i$ consists of $n$ trivial arcs in $V_i$ for 
each $i=1,2$. 
The mapping class group of this decomposition, defined as the group of isotopy classes of 
orientation-preserving homeomorphisms of $M$ preserving the triple $(M, V_1, L)$ setwise, 
is called the \textit{Goeritz group} of $(M, L; \Sigma)$,  and it is denoted by $\mathcal{G}(M, L; \Sigma)$.  
The properties of this group are often effectively described by using the \emph{Hempel distance}, 
a non-negative integer valued complexity of $(g, n)$-decompositions. 
In fact, the following is established by Iguchi and the first-named author.

\begin{theorem}[Iguchi--Koda~\cite{Iguchi-Koda}]
\label{thm: Iguchi-Koda}
Let $(M, L; \Sigma)$ be a $(g, n)$-decomposition of a link $L$ in a closed orientable $3$-manifold $M$, where 
$(g, n) \neq (0, 1), (0, 2), (1, 1)$. 
If $d(M, L; \Sigma) \geq 6$, then the Goeritz group $\mathcal{G}(M, L; \Sigma)$ is a finite group. 
\end{theorem}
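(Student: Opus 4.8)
The plan is to follow the strategy that proves the analogous statement for Heegaard splittings (Namazi, Johnson), adapted to trivial tangles. Write $P = \Sigma \cap L$, a set of $2n$ points, and let $(\Sigma, P)$ denote the corresponding punctured surface with its curve complex $\mathcal{C}(\Sigma, P)$. For $i = 1, 2$ let $\mathcal{D}_i \subset \mathcal{C}(\Sigma, P)$ be the disk set of the trivial tangle $(V_i, L \cap V_i)$, i.e. the isotopy classes of essential simple closed curves on $\Sigma \setminus P$ that bound a meridian disk or a cut-disk of $(V_i, L \cap V_i)$, so that by definition $d(M, L; \Sigma) = d_{\mathcal{C}(\Sigma, P)}(\mathcal{D}_1, \mathcal{D}_2)$. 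First I would record that restriction induces a homomorphism $\rho \colon \mathcal{G}(M, L; \Sigma) \to \mathrm{MCG}(\Sigma, P)$ which is injective, or at worst has finite kernel: if a Goeritz element is isotopically trivial on $(\Sigma, P)$, then after an isotopy it is the identity near $\Sigma$ and restricts on each side to a homeomorphism of the trivial tangle $(V_i, L \cap V_i)$ fixing the boundary pointwise, and such a homeomorphism is isotopic to the identity rel boundary (handlebody rigidity together with a light-bulb argument for the arcs). Since an element of $\mathcal{G}(M, L; \Sigma)$ preserves $V_1$, it preserves $V_2$ and $L$, so its restriction extends over both trivial tangles; hence $\Image \rho$ lies in $\mathcal{H}_1 \cap \mathcal{H}_2$, where $\mathcal{H}_i \leq \mathrm{MCG}(\Sigma, P)$ is the subgroup of classes extending to a homeomorphism of $(V_i, L \cap V_i)$, and in particular $\Image \rho$ preserves both $\mathcal{D}_1$ and $\mathcal{D}_2$. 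Because $\mathrm{MCG}(\Sigma, P)$ is virtually torsion-free, it therefore suffices to prove: if $d_{\mathcal{C}(\Sigma, P)}(\mathcal{D}_1, \mathcal{D}_2) \geq 6$, then every $\varphi \in \mathcal{H}_1 \cap \mathcal{H}_2$ has finite order. The hypothesis $(g, n) \neq (0, 1), (0, 2), (1, 1)$ enters here and below to guarantee that $\mathcal{C}(\Sigma, P)$ is connected, of infinite diameter, and not one of the sporadic complexes with extra automorphisms, so that this chain of reductions and the projection machinery below remain valid.

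Suppose $\varphi \in \mathcal{H}_1 \cap \mathcal{H}_2$ has infinite order. After replacing $\varphi$ by a suitable power — which still lies in $\mathcal{H}_1 \cap \mathcal{H}_2$ and still preserves $\mathcal{D}_1$ and $\mathcal{D}_2$ — the Nielsen–Thurston classification lets me assume $\varphi$ is pure: there is a (possibly empty) multicurve $\sigma$ fixed componentwise, $\varphi$ is the identity or pseudo-Anosov on each component of $(\Sigma, P) \setminus \sigma$, and it is a (possibly trivial) power of a Dehn twist near each component of $\sigma$; infinite order forces a pseudo-Anosov piece or a nontrivial twisting power to occur. Fix a $\varphi$-invariant essential subsurface $Y$ witnessing this — a pseudo-Anosov piece, or an annular neighborhood of a nontrivially twisted component of $\sigma$ — so that $\varphi$ acts on the subsurface complex $\mathcal{C}(Y)$ (the annular complex when $Y$ is an annulus) as a hyperbolic isometry, and by passing to a further power I may take its translation length to be arbitrarily large. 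Since $\varphi(\mathcal{D}_i) = \mathcal{D}_i$ and $\varphi(Y) = Y$, the subsurface projection $\pi_Y(\mathcal{D}_i) \subset \mathcal{C}(Y)$ is $\varphi$-invariant, hence either empty or of infinite diameter.

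The heart of the argument is to show this is incompatible with $d(M, L; \Sigma) \geq 6$. If $\pi_Y(\mathcal{D}_1) = \pi_Y(\mathcal{D}_2) = \emptyset$, then $\partial Y$ is disjoint from a disk on each side, so $d_{\mathcal{C}(\Sigma, P)}(\mathcal{D}_1, \mathcal{D}_2) \leq 2$, a contradiction. Otherwise some $\pi_Y(\mathcal{D}_i)$ is nonempty, hence of infinite diameter; feeding this into the Bounded Geodesic Image Theorem, applied to a geodesic of $\mathcal{C}(\Sigma, P)$ from $\mathcal{D}_1$ to $\mathcal{D}_2$ realizing the distance, together with the standard fact that for a trivial tangle $\pi_Y(\mathcal{D}_i)$ has uniformly bounded diameter unless $\partial Y$ is within distance $1$ of $\mathcal{D}_i$ (a surgery and band-sum argument on the defining disks and cut-disks), forces $\partial Y$ to be within a universal bounded distance of both $\mathcal{D}_1$ and $\mathcal{D}_2$. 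The remaining case, where $\varphi$ is pseudo-Anosov on all of $(\Sigma, P)$ and $\partial Y$ is empty, is handled separately: then the stable and unstable laminations of $\varphi$ are common limit points of $\mathcal{D}_1$ and $\mathcal{D}_2$ in the Gromov boundary of the hyperbolic space $\mathcal{C}(\Sigma, P)$, and a fellow-traveling estimate again bounds $d_{\mathcal{C}(\Sigma, P)}(\mathcal{D}_1, \mathcal{D}_2)$ by a universal constant. The main obstacle — and the place where the precise bound $6$, rather than some larger constant, must be earned — is exactly this calibration: pinning down the universal constants in the bounded-projection statement for disk and cut-disk sets of trivial tangles and in the whole-surface pseudo-Anosov case, and checking they are small enough that each scenario yields $d(M, L; \Sigma) \leq 5$. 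Granting this, $\varphi$ cannot have infinite order, so $\mathcal{H}_1 \cap \mathcal{H}_2$, and hence $\mathcal{G}(M, L; \Sigma)$, is finite.
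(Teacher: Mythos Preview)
This theorem is not proved in the present paper; it is stated in the Introduction as a result of Iguchi and Koda, with a citation to \cite{Iguchi-Koda}, and serves only as background motivation for the $(1,1)$ case treated here. There is therefore no proof in this paper against which to compare your attempt.

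That said, your outline is the standard architecture for results of this type and is almost certainly what \cite{Iguchi-Koda} does: embed the Goeritz group in $\MCG(\Sigma, P)$ (the paper in fact asserts this map is injective, so your ``finite kernel'' hedge is unnecessary), use virtual torsion-freeness to reduce finiteness to ruling out infinite-order elements, and then eliminate these via the Nielsen--Thurston classification together with Masur--Schleimer-type subsurface-projection bounds for the disk sets of the trivial tangles. You correctly identify that all the content lies in the calibration you defer at the end --- establishing the bounded-projection statement for disk and cut-disk sets of $(V_i, L \cap V_i)$ with constants explicit enough to force $d \leq 5$ in every scenario. As written, your sketch is a sound roadmap but not a proof, since that step is precisely where the theorem lives.
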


Among the cases of $(g, n)$ excluded from the above theorem, the Goeritz groups for the cases $(g, n) = (0, 1), (0, 2)$ can be computed easily, as 
explained in Hirose--Iguchi--Kin--Koda~\cite{Hirose-Iguchi-Kin-Koda}. 
In this paper, we focus on the remaining case, namely, the case $(g, n) = (1,1)$. 
The following is the main theorem, where the definitions of the specific elements of the Goeritz groups such as 
$\alpha, \beta$, etc., as well as a specific knot $K = K_{p/q}$ and surface $\Sigma = \Sigma_{p/q}$, 
will be given in Sections~\ref{sec: Mapping class groups of a trivial 1-tangle in a solid torus} and \ref{sec: Proof of Main Theorem}.

\begin{theorem}
\label{shukekka}
Let $(M, K; \Sigma)$ be a $(1,1)$-decomposition of a knot $K$ in 
a closed orientable $3$-manifold $M$. 
Then we have the following. 
\begin{enumerate}
  \item 
  If $M \neq S^2 \times S^1$ and $K$ is the trivial knot, then 
  we have $\mathcal{G}(M, K; \Sigma) = \ZZ / 2 \ZZ \langle \alpha \rangle \times \ZZ \langle \beta \rangle$. 
  \item 
  If $M = S^2 \times S^1$ and $K$ is the trivial knot, then we have 
  $\mathcal{G}(M, K; \Sigma) = \ZZ / 2 \ZZ \langle \alpha \rangle \times \ZZ \langle \tau \rangle \times \langle \beta, \gamma \mid \gamma^2 = 1 \rangle$.
  \item 
  If $M = S^2 \times S^1$ and $K$ is the core knot, 
  then we have 
  $\mathcal{G}(M, K; \Sigma) = \ZZ / 2 \ZZ \langle \alpha \rangle \times \ZZ \langle \tau \rangle \times \ZZ \langle \tau' \rangle$.
  \item 
  Let $p$ and  $q$ be coprime inters, where $p$ is a non-zero even number. 
For  $M = L(p, q)$, $K = K_{p/q}$, $\Sigma = \Sigma_{p/q}$, 
we have $\mathcal{G}(M, K; \Sigma) = \ZZ / 2 \ZZ \langle \alpha \rangle \times \ZZ / 2 \ZZ \langle \gamma \rangle$. 
  \item 
  Otherwise, we have $\mathcal{G}(M, K; \Sigma) = \ZZ / 2 \ZZ \langle \alpha \rangle$. 
\end{enumerate}
\end{theorem}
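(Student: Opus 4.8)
The plan is to reduce the computation to the mapping class groups of a trivial $1$-tangle in a solid torus studied in Section~\ref{sec: Mapping class groups of a trivial 1-tangle in a solid torus}, via the restriction homomorphism to the Heegaard torus. Write $(V_i, t_i)$, $i = 1, 2$, for the two sides of the given $(1,1)$-decomposition, let $P = (\Sigma, K \cap \Sigma)$ be the Heegaard torus with the two points $K \cap \Sigma$ marked, and let $m_i \subset \Sigma$ be the meridian slope of $V_i$. Every element of $\mathcal{G}(M, K; \Sigma)$ preserves $V_1$, hence $V_2$ and $\Sigma$, and restricting to $\Sigma$ gives a homomorphism
\[
\rho \colon \mathcal{G}(M, K; \Sigma) \longrightarrow \mathrm{MCG}_+(P).
\]
Since the meridian disk of a solid torus is unique up to isotopy, the image of $\rho$ lies in the subgroup $S$ of $\mathrm{MCG}_+(P)$ preserving each of the unoriented slopes $m_1$ and $m_2$ (an element of $S$ may, however, interchange the two marked points). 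The two tasks are then to identify $S = \mathrm{Im}\,\rho$ and to identify $\ker\rho$.

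For surjectivity onto $S$: a mapping class in $S$ is realized by a homeomorphism of $P$ carrying $m_i$ to $m_i$, and by the extension results of Section~\ref{sec: Mapping class groups of a trivial 1-tangle in a solid torus} such a homeomorphism extends over the trivial tangle $(V_i, t_i)$ on each side; the two extensions reglue to an element of $\mathcal{G}(M, K; \Sigma)$. For the kernel: an element of $\ker\rho$ restricts to a homeomorphism of each $(V_i, t_i)$ that is the identity on $\partial V_i$, so $\ker\rho$ is a quotient of $\mathrm{MCG}(V_1, t_1; \mathrm{rel}\,\partial) \times \mathrm{MCG}(V_2, t_2; \mathrm{rel}\,\partial)$, a group computed in Section~\ref{sec: Mapping class groups of a trivial 1-tangle in a solid torus}; the size of the quotient — that is, which boundary twists can be undone by an isotopy of $(M, K)$ that is allowed to move $\Sigma$ — is what distinguishes the cases, and one sees this by sweeping a twist across the complementary side, which is possible unless the gluing is rigid enough to obstruct it.

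For the case analysis we use the standard classification of $(1,1)$-decompositions by the pair of slopes $\{m_1, m_2\}$ on the marked torus: after applying $\mathrm{MCG}_+(T^2)$ the pair is normalized to $\{\infty, p/q\}$, giving $M = L(p,q)$ (with $L(\pm 1, q) = S^3$ and $L(0,1) = S^2 \times S^1$), after which the position of the two marked points determines $K$; and $M = S^2 \times S^1$ occurs exactly when $m_1 = m_2$. When $m_1 \neq m_2$, the stabilizer of two distinct slopes in $\mathrm{SL}_2(\ZZ)$ is $\{\pm I\}$, so generically $S = \ZZ/2\ZZ\langle\overline{\alpha}\rangle$ with $\overline{\alpha}$ the image of the elliptic involution, and $\ker\rho$ is trivial — this is case~(5). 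The image $S$ is enlarged to $\ZZ/2\ZZ \oplus \ZZ/2\ZZ$ precisely when the two marked points differ by a nonzero $2$-torsion point of $T^2$, since then the corresponding half-period translation preserves every slope, swaps the marked points, and has order two; this is the family $M = L(p, q)$, $K = K_{p/q}$, $\Sigma = \Sigma_{p/q}$ with $p$ even of case~(4), the extra generator being $\gamma$. For $M = S^2 \times S^1$ the stabilizer of the single slope $m_1 = m_2$ is infinite (it contains the Dehn twist along $m_1$, which extends over both sides) and $\ker\rho$ is no longer killed; assembling the detected part and the kernel produces the factor $\langle\tau\rangle$ (and, in the core-knot case, $\langle\tau'\rangle$) of cases~(2) and~(3), with the half-period translation contributing $\gamma$ in the trivial-knot case. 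Finally, when $K$ is the trivial knot in $M \neq S^2 \times S^1$, the arc may be isotoped near $\Sigma$ so that a boundary twist survives in $\ker\rho$, producing the infinite cyclic factor $\langle\beta\rangle$ of case~(1).

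I expect the main obstacle to be the final bookkeeping: showing that the proposed generators $\alpha, \beta, \gamma, \tau, \tau'$ are non-trivial, that they satisfy the stated relations (in cases~(2) and~(3) one must in particular distinguish $\langle\beta,\gamma\rangle \cong \ZZ \ast \ZZ/2\ZZ$ from an abelian or infinite dihedral group), and that the lists are complete. One cannot shortcut this with the Hempel distance — Theorem~\ref{thm: Iguchi-Koda} is explicitly unavailable for $(g,n) = (1,1)$, consistent with the appearance of infinite Goeritz groups here — so the verification relies on the explicit models and the presentations of $\mathrm{MCG}(V_i, t_i)$ and $\mathrm{MCG}(V_i, t_i; \mathrm{rel}\,\partial)$ from Section~\ref{sec: Mapping class groups of a trivial 1-tangle in a solid torus}; granting those, each of the five cases reduces to a finite check.
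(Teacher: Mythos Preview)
Your overall framework---identifying $\mathcal{G}(M,K;\Sigma)$ with the common stabilizer of $m_1$ and $m_2$ inside $\MCG_+(P)$---is correct and is essentially what the paper does (via the identification $\mathcal{G}(M,K;\Sigma)=\MCG(V_1,T_1,m_2)$ of Lemma~\ref{preservem2}). But two related errors derail the computation.

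First, $\rho$ is \emph{injective}: this is stated explicitly in Section~\ref{sec: Preliminaries}, and follows because a self-homeomorphism of a handlebody (or of $(\widetilde{V}_i,C_{T_i})$) that is the identity on the boundary is isotopic to the identity. Hence $\ker\rho$ is always trivial, and the groups $\MCG(V_i,t_i;\rel\,\partial)$ you invoke are trivial as well; Section~\ref{sec: Mapping class groups of a trivial 1-tangle in a solid torus} never computes them. In particular, in case~(1) the infinite cyclic factor $\langle\beta\rangle$ does \emph{not} live in the kernel---$\beta$ is a half-twist about the curve $\ell_D$ encircling both marked points, and is a perfectly visible nontrivial element of $\MCG_+(P)$.

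Second, and this is the source of the first error, you compute the stabilizer $S$ in $\mathrm{SL}_2(\ZZ)=\MCG_+(T^2)$ rather than in $\MCG_+(P)=\MCG_+(\Sigma_{1,2})$. The two marked points matter: the forgetful map $\MCG_+(\Sigma_{1,2})\to\mathrm{SL}_2(\ZZ)$ has a large kernel (point-pushing and the half-twist $\beta$), and it is precisely these elements that can stabilize both $m_1$ and $m_2$ while being invisible at the level of slopes. So ``the stabilizer of two distinct slopes in $\mathrm{SL}_2(\ZZ)$ is $\{\pm I\}$'' tells you nothing about $S$. Relatedly, ``$M=S^2\times S^1$ exactly when $m_1=m_2$'' is false for curves in $P$: in the core-knot case $m_1$ and $m_2$ are disjoint non-isotopic curves in $\Sigma_{1,2}$ of the same slope (Lemma~\ref{coredouti}).

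The paper avoids computing stabilizers in $\MCG_+(\Sigma_{1,2})$ directly. Instead it first obtains the finite presentation $\MCG(V_1,T_1)\cong\ZZ/2\ZZ\langle\alpha\rangle\times\ZZ\langle\tau\rangle\times\langle\beta,\gamma\mid\gamma^2=1\rangle$ via the action on the canceling-disk tree (Proposition~\ref{MCGV_1}), and then determines which words in $\alpha,\beta,\gamma,\tau$ preserve $m_2$. The case analysis is organized not by slopes but by how many canceling disks of $C_{T_1}$ minimize intersection with $m_2$ (Lemmas~\ref{m1m2lem}--\ref{uniqueE_1}): one such disk forces $\mathcal{G}\le G_{[D_1]}$, two force $\mathcal{G}\le G_{[\{D_1,E_1\}]}$, and Lemmas~\ref{kikakoutensu}--\ref{kikakoutensu-half-twist} then pin down exactly which powers of $\beta,\gamma,\tau$ survive. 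Your half-period-translation heuristic for case~(4) is morally right, but it is Case~1-(b)-(i) in this finer dichotomy, not a statement about $2$-torsion in $T^2$ alone.
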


In general, even if we fix $(M, L)$, $g \in \NN \cup \{0\}$ and $n \in \NN$, 
the Goeritz group of a $(g, n)$-decomposition of  $L \subset M$ may depend on the specific choice of the $(g , n)$-decomposition. 
However, the above theorem implies that for  $(1, 1)$-decompositions, the Goeritz groups depend only on 
the ambient 3-manifold $M$ and the knot $K$. 

Combining this theorem with the result of Saito~\cite{Saito} on knots with $(1, 1)$-decompositions with Hempel distance at most $1$ (see Theorem \ref{kyoridouti} below),  we see that the assertion of Theorem \ref{thm: Iguchi-Koda} remains valid for $(g, n) = (1, 1)$.
In fact, the following stronger result follows.

\begin{corollary}
Let $(M, K; \Sigma)$ be a $(1, 1)$-decomposition of a knot $K$ in a closed orientable $3$-manifold $M$. 
Then the Goeritz group $\mathcal{G}(M, K; \Sigma)$ is a finite group if and only if  
$d(M, K; \Sigma) \geq 2$. 
\end{corollary}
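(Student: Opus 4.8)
The plan is to combine the classification in Theorem~\ref{shukekka} with Saito's characterization of $(1,1)$-decompositions of Hempel distance at most $1$. First I would observe that among the five cases listed in Theorem~\ref{shukekka}, the Goeritz group is infinite precisely in cases (1), (2) and (3): in (1) it contains the free abelian factor $\ZZ\langle\beta\rangle$, in (2) it contains $\ZZ\langle\tau\rangle$ (and also the infinite group $\langle\beta,\gamma\mid\gamma^2=1\rangle$, which is infinite), and in (3) it contains $\ZZ\langle\tau\rangle$; whereas in case (4) the group is $\ZZ/2\ZZ\times\ZZ/2\ZZ$ and in case (5) it is $\ZZ/2\ZZ$, both finite. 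So the statement ``$\mathcal{G}(M,K;\Sigma)$ is finite'' is equivalent to ``$(M,K;\Sigma)$ falls into case (4) or case (5)'', and dually ``$\mathcal{G}(M,K;\Sigma)$ is infinite'' is equivalent to ``$(M,K;\Sigma)$ falls into case (1), (2) or (3)''.

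Next I would identify cases (1), (2), (3) geometrically: in each of these the knot $K$ is either the trivial knot or the core of a genus-one Heegaard solid torus in $S^2\times S^1$. I would argue that each such $(1,1)$-decomposition is either reducible or has a destabilizing/meridional structure that forces the Hempel distance to be $0$ or $1$. Conversely, I would invoke Saito's theorem (cited as Theorem~\ref{kyoridouti}, ``$d(M,K;\Sigma)\le 1$''): it gives a complete list of the pairs $(M,K)$ admitting a $(1,1)$-decomposition with $d(M,K;\Sigma)\le 1$, and I would check that this list coincides exactly with the pairs occurring in cases (1), (2), (3) of Theorem~\ref{shukekka}. Combining the two directions yields: $d(M,K;\Sigma)\le 1$ if and only if $(M,K;\Sigma)$ is in case (1), (2) or (3), if and only if $\mathcal{G}(M,K;\Sigma)$ is infinite. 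Contrapositively, $d(M,K;\Sigma)\ge 2$ if and only if $\mathcal{G}(M,K;\Sigma)$ is finite, which is the claim. (The bound $2$, rather than $6$ as in Theorem~\ref{thm: Iguchi-Koda}, is what the precise case analysis buys us.)

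The main obstacle I expect is the bookkeeping needed to match, case by case, Saito's list of low-distance $(1,1)$-knots against the ambient manifolds and knot types appearing in Theorem~\ref{shukekka}(1)--(3) — in particular making sure that \emph{every} $(1,1)$-decomposition of the trivial knot (in any $M\neq S^2\times S^1$, and in $S^2\times S^1$), and of the core knot in $S^2\times S^1$, genuinely has Hempel distance $\le 1$, and that no pair from case (4) or (5) sneaks into Saito's list. This requires knowing that the $(1,1)$-decompositions in cases (1)--(3) are reducible or weakly reducible in the appropriate sense, which should follow from the explicit descriptions of $\beta$, $\tau$, $\gamma$ given in Section~\ref{sec: Proof of Main Theorem} (e.g. $\beta$ being supported near a reducing sphere or an essential disk). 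Once that correspondence is pinned down, the corollary is immediate.
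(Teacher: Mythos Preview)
Your approach is correct and is exactly the one the paper intends: the corollary is stated without proof, the text just says it follows by combining Theorem~\ref{shukekka} with Saito's Theorem~\ref{kyoridouti}. Your identification of cases (1)--(3) as the infinite ones and (4)--(5) as the finite ones is right, and matching (1)--(3) with ``$d\le 1$'' via Saito is the whole argument.

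Your anticipated obstacle, however, is not a real one. Theorem~\ref{kyoridouti} is stated as a biconditional for an arbitrary $(1,1)$-decomposition $(M,K;\Sigma)$: $d=0$ iff $K$ is trivial, and $d=1$ iff $M=S^2\times S^1$ and $K$ is the core knot. So there is no need to inspect $\beta$, $\tau$, $\gamma$ or to verify reducibility by hand; the ``bookkeeping'' is the one-line observation that cases (1) and (2) of Theorem~\ref{shukekka} are precisely ``$K$ trivial'' and case (3) is precisely ``$M=S^2\times S^1$, $K$ core'', which together exhaust $d\le 1$ by Saito. Cases (4) and (5) are then automatically $d\ge 2$, and nothing further has to be checked.
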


\section{Preliminaries}
\label{sec: Preliminaries}

Throughout the paper, we will work in the piecewise linear category. 
Any curves (resp. surfaces) in a surface (resp. a $3$-manifold) are always assumed to be properly embedded, and their intersection is transverse and minimal up to isotopy, unless otherwise mentioned. 
For convenience, we will often not distinguish 
curves, surfaces, homeomorphisms, e.t.c.  
from their isotopy classes in their notation. 
For a subspace $Y$ of a space $X$,  
$\Nbd(Y; X)$, or simply $\Nbd(Y)$, will denote a regular neighborhood of $Y$, and $\Cl(Y)$ the closure of $Y$. 
The number of components of $X$ is denoted by $|X|$. 
For a $1$-submanifold $L$ of a 3-manifold $M$, we put $E(L) = \Cl ( M \setminus \Nbd (L))$ and call it the \textit{exterior} of $L \subset M$. 
We apply maps or mapping classes from right to left, i.e., the
product fg means that g is applied first.

Let $M$ be a closed orientable $3$-manifold, and let $\Sigma$ be a closed orientable surface embedded in $M$. 
Then, the pair $(M; \Sigma)$ is called a \emph{Heegaard splitting} of $M$ if 
there exist handlebodies $V_1$ and $V_2$ in $M$ such that 
$M = V_1 \cup V_2$ and $V_1 \cap V_2 = \partial V_1 = \partial V_2 = \Sigma$. 
The surface $\Sigma$ here is called a \emph{Heegaard surface} of $M$, and 
the \emph{genus} of the splitting is defined to be the genus of $\Sigma$. 
This Heegaard splitting is sometimes denoted by $(V_1, V_2; \Sigma)$ as well.

Let $V$ be a handlebody of genus $g$. 
The union $T_1 \cup T_2 \cup \cdots \cup T_n$ of $n$ arcs properly embedded in $V$ is called an \emph{$n$-tangle}.
An $n$-tangle $T_1 \cup T_2 \cup \cdots \cup T_n$ is said to be \emph{trivial} if for each $i$, 
there exists a disk $D_i \subset V$ such that $T_i \subset \partial D_i$, $\partial D_i - T_i \subset \partial V$ and $D_i \cap D_j = \emptyset$ ($i \neq j$). 
Each disk $D_i$ here is called a \emph{canceling disk} for $T_i$.

Let $(V_1, V_2; \Sigma)$ be a genus-$g$ Heegaard splitting of a closed orientable  $3$-manifold $M$, and 
let $L$ be a link in $M$. 
We call $(M, L; \Sigma)$ a \emph{$(g,n)$-decomposition}, or simply  a \emph{bridge decomposition}, of $(M, L)$ if 
$V_i \cap L$ is the trivial $n$-tangle in $V_i$ for $i=1,2$. 
This $(g,n)$-decomposition is sometimes denoted by $(V_1, V_2, L; \Sigma)$ as well. 
It is easy to see that every link admitting a $(g,n)$-decomposition admits a $(g+1, n-1)$ decomposition. 
Figure~\ref{figure:figure-eight} shows an example of that in the case of $(g,n) = (0,2)$.

\begin{figure}[htbp]
\centering\includegraphics[width=12cm]{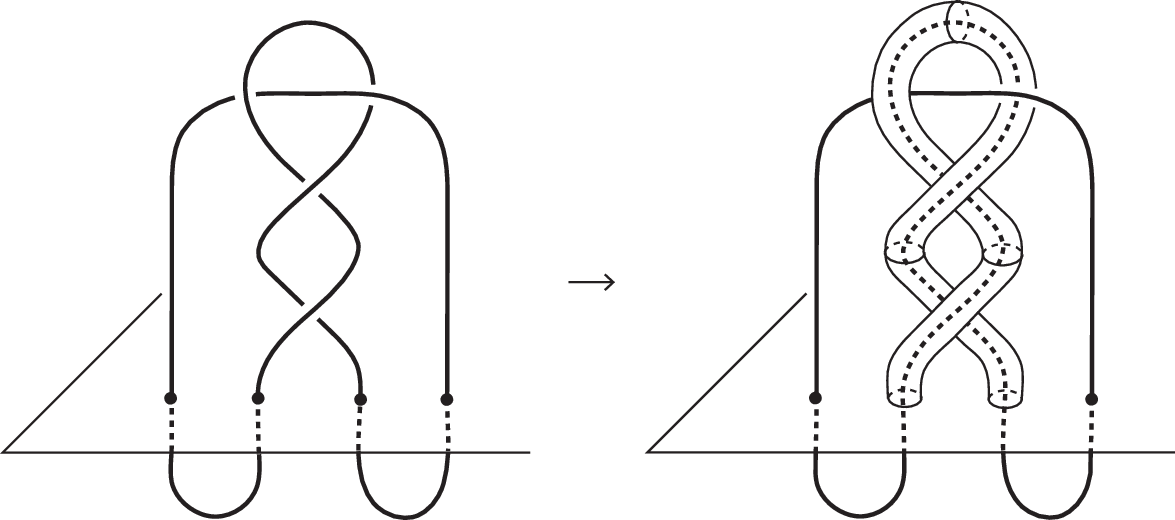}
\begin{picture}(400,0)(0,0)
\put(50,38){$S^2$}
\put(65,110){$K$}

\put(253,110){$K$}
\put(237,38){$T^2$}
\end{picture}
\caption{From a $(0,2)$-decomposition of the figure-eight knot to 
its $(1, 1)$-decomposition.}
\label{figure:figure-eight}
\end{figure}

Let $M$ be a closed orientable $3$-manifold. 
A knot $K$ in $M$ is called the \emph{trivial knot} if there exists a disk $D$ in $M$ 
such that $\partial D = K$. 
A knot $K$ is called a \emph{torus knot} if there exists a Heegaard surface $\Sigma$ of genus $1$ such that  $K \subset \Sigma$.
A knot $K$ is called a \emph{core knot} if the exterior $E(K)$ is a solid torus. 
Note that core knots are also torus knots. 
When $M=S^3$, the trivial knot is the unique core knot. 
Every torus knot admits a $(1, 1)$-decomposition as shown in Figure~\ref{figure:trefoil}. 

\begin{figure}[htbp]
\centering\includegraphics[width=10cm]{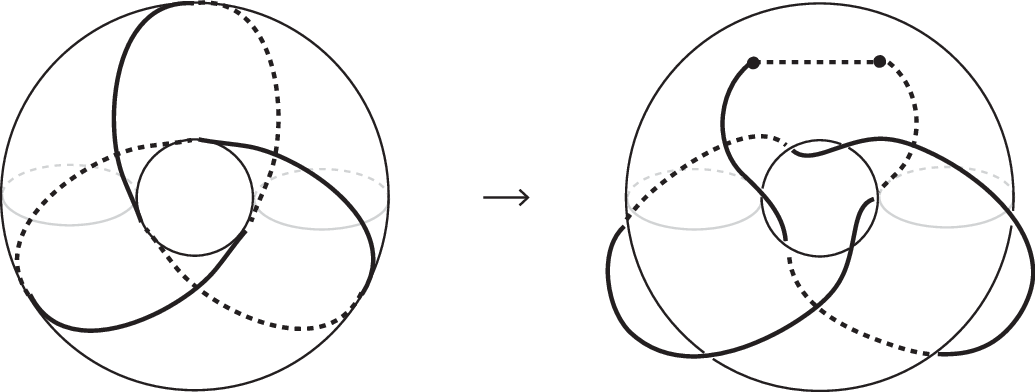}
\caption{Every torus knot admits a $(1, 1)$-decomposition.}
\label{figure:trefoil}
\end{figure}

Let $L$ and $L'$ be links in a closed orientable $3$-manifold $M$. 
Let $(M, L; \Sigma)$ and $ (M, L' ; \Sigma')$ be their bridge decompositions. 
We say that they are \emph{equivalent} if there exists an orientation-preserving homeomorphism $f$ of 
$M$ such that $f(\Sigma) = \Sigma'$ and $f(L) = L'$. 
The following results were shown by 
Morimoto~\cite{Mor89} and Kobayashi--Saeki~\cite{Kobayashi-Saeki}. 
See also Cho--Koda~\cite{Cho-Koda}.

\begin{theorem}
\begin{enumerate}
\item The $(1, 1)$-decomposition of a torus knot in a closed orientable $3$-manifold is unique up to equivalence. 
\item The $(1, 1)$-decomposition of a $2$-bridge $($in $S^3$$)$ knot is unique up to equivalence.
\end{enumerate}
\end{theorem}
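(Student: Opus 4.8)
The two statements are due to Morimoto~\cite{Mor89} and Kobayashi--Saeki~\cite{Kobayashi-Saeki} (see also Cho--Koda~\cite{Cho-Koda}); here we outline the strategy. The Heegaard surface $\Sigma$ of a $(1,1)$-decomposition has genus one, so $M$ is $S^3$, $S^2\times S^1$, or a lens space. By the classical uniqueness of the genus-one Heegaard splitting of such a manifold (Waldhausen for $S^3$; Bonahon--Otal for the other cases), any two $(1,1)$-decompositions of $(M,K)$ have ambient-isotopic Heegaard surfaces, so after composing with a homeomorphism of $M$ we may assume that the two solid tori and the Heegaard surface form a single fixed standard triple $(V_1^0, V_2^0; \Sigma_0)$. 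The only remaining data is then the $1$-bridge presentation $K=\tau_1\cup\tau_2$, $\tau_i=K\cap V_i^0$, and it suffices to prove that any two such presentations of the given knot are interchanged by a homeomorphism of the pair $(M,\Sigma_0)$; here one may freely use the generators of the genus-one Goeritz group $\mathcal{G}(M;\Sigma_0)$, which is itself completely understood.

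\emph{Torus knots.} The plan is to isotope the $1$-bridge presentation until $K$ lies entirely on $\Sigma_0$. First push $\tau_2$, which is boundary-parallel in $V_2^0$, onto $\Sigma_0$. Then choose a canceling disk $D_1$ for $\tau_1$ with $\partial D_1=\tau_1\cup\alpha_1$ and $\alpha_1\subset\Sigma_0$, clean up $\alpha_1$ by an innermost-bigon argument on $\Sigma_0$ so that its interior is disjoint from the relocated $\tau_2$, and push $\tau_1$ across $D_1$ onto $\Sigma_0$ as well. The result is an essential simple closed curve on $\Sigma_0\cong T^2$, isotopic in $M$ to $K$. It then remains to verify that two such curves on $\Sigma_0$ representing a fixed torus knot in $M$ lie in a single orbit of $\mathcal{G}(M;\Sigma_0)$; this is a finite computation using the explicit description of $\mathcal{G}(M;\Sigma_0)$ together with the structure of the torus knot exterior (which in particular determines, up to isotopy, the Heegaard torus carrying $K$). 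The trivial knot and the core knot are the degenerate sub-cases in which this curve is a meridian, a longitude, or a core curve, treated the same way.

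\emph{Two-bridge knots.} Here one recognizes every $(1,1)$-decomposition of $(S^3,K)$ as a meridional stabilization of a $(0,2)$-decomposition. Passing to the exterior and capping along a meridian, a $(1,1)$-decomposition yields a genus-two Heegaard splitting of $E(K)$ --- equivalently, an unknotting tunnel for $K$ --- together with the extra information that the splitting surface meets $\partial E(K)$ in a pair of meridians of $K$. Since the exterior of a nontrivial $2$-bridge knot is irreducible and is not a solid torus, this splitting is unstabilized, hence appears in Kobayashi's classification of the genus-two Heegaard splittings (equivalently, unknotting tunnels) of $2$-bridge knot exteriors; the meridional constraint then singles out precisely the tunnel coming from the $2$-bridge sphere, and the various such tunnels are interchanged by symmetries of $(S^3,K)$. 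Since the $2$-bridge sphere is unique up to isotopy (Schubert), all of these $(1,1)$-decompositions are equivalent. The trivial knot, for which $E(K)$ is a solid torus, is immediate. An alternative route, closer to the distance techniques used elsewhere in the paper, compares a sweepout by the genus-one bridge surface with a sweepout by the $2$-bridge sphere through a Rubinstein--Scharlemann graphic and reads off the required meridional destabilization.

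\emph{Main obstacle.} The crux is the matching step in the two-bridge case: deciding which tunnels in Kobayashi's list actually extend to $(1,1)$-decompositions of $(S^3,K)$ and showing that all of them give equivalent decompositions. This is precisely where the hypothesis that $K$ is $2$-bridge, rather than an arbitrary $(1,1)$-knot, is used, and it is the part of the argument that does not reduce to soft general-position reasoning. The analogous but milder difficulty in the torus-knot case is establishing the transitivity of the $\mathcal{G}(M;\Sigma_0)$-action on the relevant slopes.
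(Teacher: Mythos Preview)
The paper does not prove this theorem; it is stated in the preliminaries as a known result, with citations to Morimoto~\cite{Mor89} and Kobayashi--Saeki~\cite{Kobayashi-Saeki} (see also Cho--Koda~\cite{Cho-Koda}), and no argument is given. So there is no ``paper's proof'' to compare against, and your attribution to the same sources is correct.

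That said, your sketch for the torus-knot case has a genuine gap. The step ``clean up $\alpha_1$ by an innermost-bigon argument on $\Sigma_0$ so that its interior is disjoint from the relocated $\tau_2$'' cannot work as written: bigon elimination on the torus minimizes the geometric intersection number of two arcs rel endpoints, but it does not in general force that number to be zero. If it did, your argument would show that \emph{every} $(1,1)$-knot can be isotoped onto the Heegaard torus and is therefore a torus knot, which is of course false. The hypothesis that $K$ is a torus knot must enter precisely at this step, and you have not indicated how; this, rather than the transitivity of the $\mathcal{G}(M;\Sigma_0)$-action you flag in your final paragraph, is the real crux of part~(1). Your outline for the $2$-bridge case is a fair summary of the tunnel-classification and Rubinstein--Scharlemann graphic approaches in the cited references, and there the ``main obstacle'' you identify is indeed where the work lies.
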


Let $\Sigma_{g, k}$ be a closed orientable surface of genus $g$ with $k$ marked points.
A simple closed curve $c$ in $\Sigma_{g, k}$ is said to be \emph{essential} if $c$ 
does not bound a disk with at most one marked point in $\Sigma_{g, k}$.
The \emph{curve graph} $\mathcal{C}(\Sigma_{g, k})$ of $\Sigma_{g, k}$ is defined to be a graph whose vertex set is the set of isotopy classes of essential simple closed curves in $\Sigma_{g, k}$, where two vertices $c_1$ and $c_2$ span an edge if they are distinct and they have disjoint representatives. 
A necessary and sufficient condition for $\mathcal{C}(\Sigma_{g, k})$ to be non-empty and connected is $3g-4+k > 0$. 
In this case, the metric $d_{\mathcal{C}(\Sigma_{g, k})}$ on $\mathcal{C}(\Sigma_{g, k})$ is given by the path length, where each edge has length one. 

Let $(g, n) \neq (0, 1), (0, 2)$, and let $(M, L; \Sigma)$ be a $(g, n)$-decomposition of a link $L$  in a closed orientable $3$-manifold $M$. 
We identify the pair $(\Sigma, \Sigma \cap L)$ with a closed orientable surface $\Sigma_{g, 2n}$ with $2n$ marked points. 
Let $\mathcal{D}(V_i - L)$ ($i=1,2$) denote the subset of $\mathcal{C}(\Sigma_{g, 2n})$ consisting of 
the vertices whose representatives bound disks in $V_i - L$. 
Then, the \emph{distance} $d(M, L; \Sigma)$ of the decomposition $(M, L; \Sigma)$ is defined by 
\[ d(M, L; \Sigma) := d_{\mathcal{C}(\Sigma_{g, 2n})}(\mathcal{D}(V_1 - L), \mathcal{D}(V_2 - L)) .\]

The following theorem by Saito~\cite{Saito} classifies all knots that admit $(1,1)$-decompositions of distance 
at most one. 

\begin{theorem}[Saito~\cite{Saito}]\label{kyoridouti}
Let $(M, K; \Sigma)$ be a $(1, 1)$-decomposition of a knot $K$ in a closed orientable $3$-manifold $M$. 
\begin{enumerate}
\item We have $d(M, K; \Sigma) = 0$ if and only if $K$ is the trivial knot. 
\item We have $d(M, K; \Sigma) = 1$ if and only if $M = S^2 \times S^1$ and $K$ is the core knot. 
\end{enumerate}
\end{theorem}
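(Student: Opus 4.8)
The plan is to analyse the decomposition through the disk sets $\mathcal{D}(V_1 - K)$ and $\mathcal{D}(V_2 - K)$ inside the twice-marked torus $\Sigma_{1,2}$. Write $t_i = K \cap V_i$ for the trivial arc in the solid torus $V_i$, and let $\Delta_i \subset V_i$ be a canceling disk for $t_i$, so that $\alpha_i := \Delta_i \cap \Sigma$ is an arc joining the two marked points. First I would record the structure of $\mathcal{D}(V_i - K)$ forced by $V_i$ being a solid torus and $t_i$ being trivial: a vertex is either a meridian of $V_i$ isotoped off $t_i$ (a curve non-separating on the underlying torus), or the boundary of a disk in $\Sigma$ containing both marked points; and, by tubing $\Delta_i$ along $t_i$ and taking the frontier in $V_i$, the curve $c_i$ encircling $\alpha_i$ always represents a vertex of $\mathcal{D}(V_i - K)$. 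In particular, if $\alpha_1 \simeq \alpha_2$ rel endpoints in $\Sigma$ then $c_1 = c_2$ in $\mathcal{C}(\Sigma_{1,2})$, and moreover $\Delta_1 \cup_{\alpha_1} \Delta_2$ is then a disk in $M$ bounded by $K$.

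For assertion (1), I would prove the ``if'' direction via uniqueness of bridge decompositions: a trivial knot in $M$ is isotopic to a meridian of $V_1$, hence lies on a genus-$1$ Heegaard surface, hence is a torus knot, so its $(1,1)$-decomposition is unique up to equivalence; the model coming from a spanning disk meeting $\Sigma$ in a single arc has $\alpha_1 \simeq \alpha_2$, so $d(M, K; \Sigma) = 0$, and $d$ is an invariant of the equivalence class. For the ``only if'' direction, take a common vertex $c \in \mathcal{D}(V_1 - K) \cap \mathcal{D}(V_2 - K)$ with bounding disks $D_i \subset V_i - K$, so that $S := D_1 \cup D_2$ is a $2$-sphere disjoint from $K$. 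If $c$ is non-separating on the torus it is a meridian of both $V_1$ and $V_2$, forcing $M = S^2 \times S^1$; cutting along $S$ places $K$, still the union of two trivial arcs, in $S^2 \times I$, and a direct analysis shows $K$ bounds a disk. If instead $c$ bounds a disk $\delta \subset \Sigma$, then $\delta$ carries both marked points, $D_i \cup \delta$ bounds a ball $B_i \subset V_i$ containing $t_i$, the canceling disk of $t_i$ can be isotoped into $B_i$ so that $\alpha_i \subset \delta$, and — arcs of a disk with the same endpoints being isotopic rel endpoints — we get $\alpha_1 \simeq \alpha_2$ and hence $K$ bounds a disk.

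For assertion (2), if $d(M, K; \Sigma) = 1$ there are distinct vertices $c_1 \in \mathcal{D}(V_1 - K)$ and $c_2 \in \mathcal{D}(V_2 - K)$ with disjoint, hence disjointly bounding, representatives. Since two disjoint curves of $\Sigma_{1,2}$ each bounding a twice-marked disk must be isotopic in $\Sigma_{1,2}$, at least one of $c_1, c_2$ is non-separating on the torus, say $c_1$, a meridian of $V_1$; a short further analysis of $c_2$ — the remaining configuration being excluded because it produces a common compressing curve, contradicting $d = 1$ — then forces $c_2$ to be a meridian of $V_2$ parallel to $c_1$, so that $M = S^2 \times S^1$. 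As $c_1 \neq c_2$ in $\Sigma_{1,2}$, each of the two annuli cut off by $c_1 \cup c_2$ on $\Sigma$ carries exactly one marked point, so taking $A$ to be one of them, $S := D_1 \cup A \cup D_2$ is a non-separating $2$-sphere meeting $K$ in a single point; cutting along $S$ presents $K$ as a union of two trivial arcs crossing $S^2 \times I$ once, which one shows forces $K$ to be the core knot. Conversely, the core knot in $S^2 \times S^1$ is not null-homologous, hence not trivial, so $d(M, K; \Sigma) \geq 1$ by part (1); and in its (unique) $(1,1)$-decomposition one exhibits a meridian of $V_1$ and a meridian of $V_2$ that are disjoint on $\Sigma$ and separated by the two marked points, giving disjoint non-isotopic vertices and hence $d(M, K; \Sigma) \leq 1$.

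The step I expect to be the main obstacle is the bookkeeping in the two ``only if'' directions in the reducible case $M = S^2 \times S^1$, where one cannot appeal to irreducibility of the ambient manifold: controlling the circles of intersection with $\Sigma$, and, above all, proving the dichotomy that a knot arising as a union of two trivial arcs from a non-separating $2$-sphere in $S^2 \times S^1$ meeting it in zero or one points is, respectively, trivial or the core knot. I would isolate that dichotomy as a lemma and prove it by a sweep-out or thin-position argument using the family of $2$-spheres parallel to the $S^2$-factor, tracking the intersections of $K$ and of its canceling disks with that family.
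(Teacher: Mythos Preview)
This theorem is not proved in the paper at all: it is quoted from Saito~\cite{Saito} and used as a black box (for example, the proof of Lemma~\ref{coredouti} invokes both parts of it, and even appeals to \cite[Lemma~3.3]{Saito} for the auxiliary fact that a separating compressing curve on one side compresses on the other). There is thus no ``paper's own proof'' to compare your proposal against.

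Your outline is nonetheless along the right lines. The separating subcase of~(1) goes through essentially as you say, once you verify the small point you skate over: that the arc $t_i$, which necessarily lies in the ball $B_i$ cut off by $D_i \cup \delta$ (its endpoints are on $\delta$ and it misses $D_i$), remains \emph{trivial} in $B_i$; this holds because $V_i \setminus \Nbd(t_i)$ is a genus-$2$ handlebody if and only if $B_i \setminus \Nbd(t_i)$ is a solid torus, the passage from one to the other being attachment of the $1$-handle $V_i'$ along the disk $D_i$. In~(2), your reduction to both $c_i$ being non-separating meridians is exactly the argument the paper gives for Lemma~\ref{coredouti}, and the ``short further analysis'' you allude to is precisely Saito's Lemma~3.3.

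You have correctly isolated the real content: the non-separating subcases, which force $M = S^2 \times S^1$ and then require recognising the knot from its intersection with a non-separating $2$-sphere. Your proposed sweep-out/thin-position attack on that dichotomy can be made to work. In Saito's original the organisation is different --- the argument stays in the curve complex $\mathcal{C}(\Sigma_{1,2})$ and classifies directly which essential curves lie in $\mathcal{D}(V_i - t_i)$ --- which avoids the $3$-dimensional sphere-bookkeeping you anticipate, at the cost of a more combinatorial analysis on the surface.
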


Finally, we review the definition of the Goeritz group of bridge decompositions. 
Let $M$ be a closed orientable $3$-manifold, and let $X_i$ ($i = 1, \ldots , n$) be a subspace of $M$.
The \emph{mapping class group} of $(M, X_1, \ldots , X_n)$, denoted by $\MCG(M, X_1,\ldots , X_n)$, 
is defined as the group of  isotopy classes of orientation-preserving homeomorphisms of 
$M$ that map each $X_i$ to itself, 
where the isotopy is required to preserve each $X_i$ as a set.  

Now, let $L$ be a link in a closed orientable $3$-manifold $M$.
For a bridge decomposition $(V_1, V_2, L; \Sigma)$, 
we call the mapping class group $\MCG(M, V_1, L)$ 
its \emph{Goeritz group}, and denote it by 
$\mathcal{G} (M, L ; \Sigma)$. 
It can be easily verified that the natural maps 
$\mathcal{G}(M, L; \Sigma) \to \MCG(\Sigma, \Sigma \cap L)$ and 
$\MCG(V_i, V_i \cap L) \to \MCG(\Sigma, \Sigma \cap L)$ obtained by 
restricting the representing maps to $\Sigma$ are injective. 
Thus, both the groups $\mathcal{G}(M, L; \Sigma)$ and $\MCG(V_i, V_i \cap L)$ $(i = 1, 2)$ can be regarded as subgroups of $\MCG(\Sigma, \Sigma \cap L)$, and under this identification, we can express the following relation:
\[ \mathcal{G}(M, L; \Sigma) = \MCG(V_1, V_1 \cap L) \cap \MCG(V_2, V_2 \cap L) < \MCG(\Sigma, \Sigma \cap L).\] 
In the discussion that follows, we actually use this subgroup hierarchy, often treating elements of $\mathcal{G}(M, L; \Sigma)$ 
as elements of $\MCG(V_1, V_1 \cap L)$ and $\MCG(\Sigma, \Sigma \cap L)$ as long as this does not cause confusion. 
We note that the Goeritz groups of equivalent bridge decompositions are clearly isomorphic by definition. 

%
%

\section{Mapping class groups of a trivial $1$-tangle in a solid torus}
\label{sec: Mapping class groups of a trivial 1-tangle in a solid torus}

Let $(V_1, V_2, K; \Sigma)$ be a $(1, 1)$-decomposition of a knot $K$ in a closed orientable $3$-manifold $M$. 
Since $(V_1, V_2; \Sigma)$ is a Heegaard splitting of genus $1$, $M$ is either $S^3$, $S^2 \times S^1$, or a lens space. 
From now, we use $K$ instead of $L$, because the links that allow $(1, 1)$-decomposition are knots. 
Let $T_i := V_i \cap K$ $(i = 1, 2)$ be the trivial $1$-tangle in $V_i$. 
In this section, we discuss the mapping class group $\MCG (V_1, T_1)$, which is of fundamental importance 
in finding a presentation of the Goeritz group $\mathcal{G} (M, L ; \Sigma)$.

We now define a simplicial complex, which will be used to obtain a finite presentation of $\MCG(V_1, T_1)$.
Identify $\Nbd (T_1; V_1)$ with $D^2 \times [0, 1]$, and let $C_{T_1}$ be the boundary of the disk $D^2 \times \{ \frac{1}{2} \}$. 
Let $\vtil$ be the exterior $E(T_1)$ of $T_1$, which is a handlebody of genus two. 
By attaching a $2$-handle $D^2 \times [0, 1]$ to $\vtil$ along $C_{T_1}$, we can recover $(V_1, T_1)$ from $(\vtil, C_{T_1})$.
Note that here we have a natural isomorphism  
$\MCG(V_1, T_1) \cong \MCG(\vtil, C_{T_1})$. 
In what follows, we often identify these two groups. 
A disk in $\vtil$ whose boundary intersects $C_{T_1}$ transversely at a single point is called a \emph{canceling disk} for $C_{T_1}$ (see Figure~\ref{figure:canceling_disks}).  
\begin{figure}[htbp]
\centering\includegraphics[width=14cm]{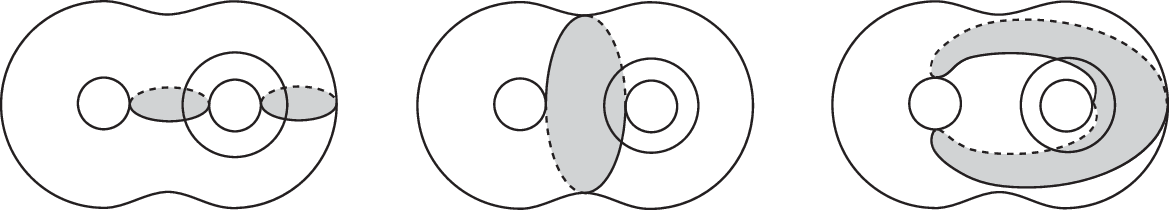}
\begin{picture}(400,0)(0,0)
\put(53,0){$\vtil$}
\put(75,21){$C_{T_1}$}

\put(195,0){$\vtil$}
\put(240,48){$C_{T_1}$}

\put(338,0){$\vtil$}
\put(381,48){$C_{T_1}$}

\end{picture}
\caption{Canceling disks of $C_{T_1}$.}
\label{figure:canceling_disks}
\end{figure}
Clearly, there is a natural bijection between 
the set of isotopy classes of the canceling disks for $C_{T_1}$ in $\vtil$ 
and the set of isotopy classes of the canceling disks for $T_1$ in $V_1$.  

Let $V$ be a handlebody of genus $g$. 
The \emph{disk complex}, denoted by $\mathcal{D}(V)$, of $V$ is defined to be a simplicial complex 
whose vertex set is the set of isotopy classes of essential disks in $V$, where 
$k+1$ vertices span a $k$-simplex if they are distinct and they have pairwise disjoint representatives. 
The full subcomplex, denoted by $\mathcal{CD}(\vtil, C_{T_1})$, of the disk complex $\mathcal{D}(\vtil)$ spanned by 
vertices represented by canceling disks for $C_{T_1}$ is called a 
\emph{canceling disk complex}. 

In the following, we show that the complex $\mathcal{CD}(\vtil, C_{T_1})$ is a tree.

\begin{lemma}\label{canceling1}
The canceling disk complex $\mathcal{CD}(\vtil, C_{T_1})$ is $1$-dimensional. 
\end{lemma}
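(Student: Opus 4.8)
The statement to prove is that the canceling disk complex $\mathcal{CD}(\vtil, C_{T_1})$ is $1$-dimensional, i.e. no two disjoint canceling disks together with a third one can be made pairwise disjoint; equivalently, any collection of pairwise disjoint canceling disks for $C_{T_1}$ in the genus-$2$ handlebody $\vtil$ has at most two elements. The plan is to argue by cutting $\vtil$ along a pair of disjoint canceling disks and analyzing the resulting piece. First I would observe that if $D$ and $D'$ are disjoint non-isotopic canceling disks, then their boundaries $\partial D$ and $\partial D'$ each meet $C_{T_1}$ in a single point; isotoping so that these two intersection points on $C_{T_1}$ are distinct, cutting $\vtil$ along $D \cup D'$ produces a handlebody $W$, and one tracks what happens to the curve $C_{T_1}$: cutting along each of $D$, $D'$ splits $C_{T_1}$ at one point, so $C_{T_1}$ becomes a single arc $\gamma$ properly embedded in the cut-open surface, with its endpoints on the scar disks. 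The key point is a count: since $\vtil$ has genus $2$, cutting along two disjoint non-separating-in-an-appropriate-sense disks typically yields a solid torus or a $3$-ball (possibly with the two disk pairs identified), and in any case a piece in which there is essentially no room for a further canceling disk.

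The main structural input I would use is that a canceling disk is automatically non-separating in $\vtil$ — its boundary meets $C_{T_1}$ once, and $C_{T_1}$ is a non-separating curve on $\partial \vtil$ (it is the belt circle of the $2$-handle whose attachment recovers the solid torus $V_1$), so $\partial D$ cannot be separating on $\partial \vtil$, hence $D$ is non-separating in $\vtil$. Therefore two disjoint non-isotopic canceling disks cut the genus-$2$ handlebody $\vtil$ into a connected handlebody of genus $0$, i.e. a $3$-ball $B$, and $C_{T_1}$ becomes an arc $\gamma$ in $\partial B$ joining two of the four scar disks (one copy of $D$ to one copy of $D'$, say). A third canceling disk $D''$ disjoint from both $D$ and $D'$ would survive into $B$ as a disk $D''_0$ whose boundary meets $\gamma$ exactly once; but in a $3$-ball, every properly embedded disk is boundary-parallel, and I would show that $D''_0$ must then be isotopic (rel the scar structure) to one pushed off $D$ or off $D'$, forcing $D''$ to be isotopic to $D$ or $D'$ in $\vtil$. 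This contradiction establishes that there is no $2$-simplex.

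Concretely, the steps in order: (1) record that $C_{T_1}$ is non-separating on $\partial\vtil$ and hence every canceling disk is non-separating in $\vtil$; (2) given disjoint non-isotopic canceling disks $D, D'$, arrange $\partial D \cap \partial D' = \emptyset$ and the two points $\partial D \cap C_{T_1}$, $\partial D' \cap C_{T_1}$ distinct, and cut to get the $3$-ball $B$ with the arc $\gamma = $ (image of $C_{T_1}$) in $\partial B$; (3) suppose a third canceling disk $D''$ is disjoint from $D\cup D'$; minimize $|\partial D'' \cap (\partial D \cup \partial D')|$ and reduce to the case $D'' \subset B$; (4) in $B$, use that a properly embedded disk separates $B$ into two balls, analyze on which side the arc $\gamma$ and the scar disks lie, and conclude $D''$ is $\partial$-parallel past exactly one of the scars of $D$ or $D'$, whence $D'' \simeq D$ or $D'' \simeq D'$ in $\vtil$.

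The hard part will be step (4): the careful bookkeeping of how the arc $\gamma$ and the four scar disks are distributed on $\partial B$, and ruling out a canceling disk in $B$ that is genuinely new. One must use the hypothesis that $\partial D''$ meets $C_{T_1}$ exactly once — equivalently $\partial D''_0$ meets $\gamma$ once — together with the planarity of $\partial B$ to pin down the isotopy class. I expect this to come down to the observation that an arc meeting a fixed arc $\gamma$ once in a disk (= the planar surface $\partial B$ minus the scar disks) and disjoint from the other scars is determined up to isotopy by which scar endpoint-region it is parallel to; a small innermost-disk / outermost-arc argument should finish it. The non-isotopy-to-$D$-or-$D'$ assumption then yields the contradiction, proving $\mathcal{CD}(\vtil, C_{T_1})$ has no simplices of dimension $\geq 2$.
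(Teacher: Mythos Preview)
Your strategy is sound but contains a slip and is considerably more laborious than the paper's argument. The slip: after cutting $\vtil$ along two disjoint canceling disks $D$ and $D'$, the curve $C_{T_1}$ is cut at \emph{two} points, so it becomes \emph{two} arcs in $\partial B$, not a single arc $\gamma$. This does not wreck your plan---the analysis in step~(4) still goes through once you track both arcs and observe that $\partial D''$ meets exactly one of them in a single point and misses the other, so that the disk in $\partial B$ bounded by $\partial D''$ contains either one or three of the four scars---but it does make the casework you deferred as ``should finish it'' longer than you suggest, and you should actually carry it out rather than wave at an innermost/outermost argument.

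The paper takes a much shorter route. Rather than cutting down to a ball and classifying disks there, it argues directly: given three pairwise disjoint canceling disks $D_0, D_1, D_2$, each is non-separating (its boundary meets the non-separating curve $C_{T_1}$ once), and $D_0 \cup D_1$ is already non-separating (cut along $D_0$ to get a solid torus, in which $D_1$ must be the meridian). Hence $\vtil \setminus (D_0 \cup D_1 \cup D_2)$ has at most two components; but $C_{T_1}$ meets $D_0 \cup D_1 \cup D_2$ in exactly three points, and a closed curve passing between two complementary regions must do so an even number of times, so the complement is in fact connected. A non-separating system of three disjoint disks in a genus-$2$ handlebody is impossible, giving the contradiction. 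Your approach buys a concrete geometric picture of why a third disk is forced to be parallel to one of the first two; the paper's parity argument buys brevity and sidesteps the boundary-parallel casework entirely.
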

\begin{proof}
Suppose that there exists a simplex  of dimension $2$ or more in the complex $\mathcal{CD}(\vtil, C_{T_1})$.
Take three distinct vertices $D_0$, $D_1$, and $D_2$ from such a  simplex.
Since $D_0$, $D_1$, and $D_2$ intersect $C_{T_1}$ exactly once, they are all non-separating. 
Thus, the union $D_0 \cup D_1$ remains non-separating, 
because if we cut $\tilde{V}$ along $D_0$, 
the result is a solid torus, and $D_1$ then becomes its unique meridian disk, 
which is non-separating. 
It follows that $\vtil - (D_0 \cup D_1 \cup D_2)$ consists of at most two components.  
From this and the fact that the union $D_0 \cup D_1 \cup D_2$ intersects $C_{T_1}$ exactly three times, 
we see that the union $D_0 \cup D_1 \cup D_2$ is non-separating. 
This contradicts the fact that the genus of $\vtil$ is two.
\end{proof}

In order to prove the contractibility of the complex $\mathcal{CD}(\vtil, C_{T_1})$, 
we use the following sufficient condition given by Cho~\cite{Cho}.
First, we define an operation called the surgery of a disk, which is used to describe the condition.
Let $V$ be a handlebody of genus $g$. 
Let $D$ and $E$ be non-separating disks in $V$ with  $D \cap E \neq \emptyset$.
Let $\alpha$ be an outermost arc of $D \cap E$ in $D$,  
and $C$ a subdisk of $D$ with $C \cap E = \alpha$. 
Then, the arc $\alpha$ divides $E$ into two disks $E_1$ and $E_2$. 
Set $F_1 := E_1 \cup C$ and $F_2 := E_2 \cup C$.
The disks $F_1$ and $F_2$ are called \emph{disks obtained by surgery} on $E$ along $C$. 

\begin{theorem}[Cho~\cite{Cho}]\label{chokashuku}
Let $V$ be a handlebody of genus $g$.
Let $\mathcal{K}$ be a full subcomplex of the disk complex $\mathcal{D}(V)$.
If $\mathcal{K}$ satisfies the following condition, then $\mathcal{K}$ is contractible: 
\begin{itemize}
\item
For any disks $D$ and $E$ in $V$ that represent vertices of $\mathcal{K}$ and satisfy $D \cap E \neq \emptyset$, and for any outermost arc $\alpha$ of $D \cap E$ in $D$, at least one of the two disks $F_1$ and $F_2$ obtained by surgery on $E$ along a subdisk $C$ of $D$ with $C \cap E = \alpha$ represents a vertex of $\mathcal{K}$.
\end{itemize}
\end{theorem}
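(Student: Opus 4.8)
The plan is to show that $\mathcal{K}$ (which we tacitly assume non-empty) has trivial homotopy groups in every dimension, by the disk-surgery technique of Hatcher and McCullough adapted to this setting. The only non-combinatorial ingredient is that the closed star of a vertex in a simplicial complex is contractible; the surgery hypothesis enters precisely to guarantee that every disk produced along the way still represents a vertex of $\mathcal{K}$.

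First I would treat $\pi_0$. Fix a vertex represented by a disk $E_0$ of $\mathcal{K}$, and given another vertex represented by a disk $D$, induct on $|D \cap E_0|$. If $D \cap E_0 = \emptyset$ the two vertices coincide or are joined by an edge. Otherwise pick an arc $\alpha$ of $D \cap E_0$ that is outermost in $E_0$ and let $C \subset E_0$ be the subdisk it cuts off, so $\Int C \cap D = \emptyset$. Applying the hypothesis with $E_0$ in the role of its ``$D$'' and $D$ in the role of its ``$E$'', one of the two disks obtained by surgery on $D$ along $C$ represents a vertex of $\mathcal{K}$, say $D'$; a routine check shows $D'$ can be isotoped off $D$ and that $|D' \cap E_0| < |D \cap E_0|$. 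Hence the vertices of $D$ and $D'$ span an edge of $\mathcal{K}$, and induction applied to $D'$ connects it to $E_0$, so $\mathcal{K}$ is connected.

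For $n \geq 1$ I would represent a class in $\pi_n(\mathcal{K})$ by a simplicial map $g \colon P \to \mathcal{K}$ with $P$ a triangulation of $S^n$, fix $E_0$ as above, and set $\Phi(g) = \sum_v |g(v) \cap E_0|$, summed over the vertices $v$ of $P$. If $\Phi(g) = 0$ every $g(v)$ is disjoint from or equal to $E_0$, so the image of $g$ lies in the closed star of $E_0$ in $\mathcal{K}$, which is contractible, and $g$ is null-homotopic. If $\Phi(g) > 0$, I would choose a vertex $v_0$ with $g(v_0) \cap E_0 \neq \emptyset$ and then choose---this is the delicate point---an arc $\alpha$ of $g(v_0) \cap E_0$ that is outermost in $E_0$ relative to the whole union of $g(v_0)$ with the disks $g(w)$, $w$ a vertex of the link of $v_0$; let $C \subset E_0$ be the subdisk it cuts off. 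Applying the hypothesis with $E_0$ playing ``$D$'' and $g(v_0)$ playing ``$E$'', surgery on $g(v_0)$ along $C$ yields a disk $D'$ representing a vertex of $\mathcal{K}$ with $|D' \cap E_0| < |g(v_0) \cap E_0|$ and, by the choice of $\alpha$, disjoint from $g(v_0)$ and from every $g(w)$ with $w$ in the link of $v_0$. This last property lets me replace the value $g(v_0)$ by $D'$ via a homotopy supported on the open star of $v_0$: on each simplex through $v_0$, the old vertex, $D'$, and the images of the remaining vertices are pairwise disjoint disks, so they span a simplex of $\mathcal{K}$, across which the cone on the link of $v_0$ slides. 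Since this strictly decreases $\Phi$, finitely many such moves reduce to the case $\Phi = 0$, and $g$ is null-homotopic.

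The hard part will be the delicate choice above: an arc of $g(v_0) \cap E_0$ outermost in $E_0$ relative to the whole union need not exist on the first attempt, because the disks $g(w)$ attached to distinct vertices of the link of $v_0$ need not be disjoint from one another, and their traces on $E_0$ can fill up every naive outermost subdisk cut off by $g(v_0)$. Handling this requires an auxiliary, nested induction in which innermost intersection arcs on $E_0$ coming from the disks $g(w)$ are surgered away first---again invoking the hypothesis only to keep the intermediate disks in $\mathcal{K}$---after which a genuine outermost arc for $g(v_0)$ becomes available. This bookkeeping, together with the routine surgery facts (the surgered disk is isotopic off the disk being cut, and its intersection count with $E_0$ strictly drops), is the technical core; the rest is formal.
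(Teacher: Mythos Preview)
The paper does not give a proof of this theorem: it is quoted from Cho and used as a black box. So there is no proof in the paper to compare yours against; you are in effect reconstructing the McCullough--Cho surgery argument from scratch.

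Your overall plan---kill each $\pi_n$ by repeatedly surgering a simplicial representative $g\colon P\to\mathcal K$ toward a fixed base disk $E_0$, invoking the hypothesis only to keep the surgered pieces in $\mathcal K$---is the standard one, and you have correctly isolated the one genuine obstacle: the surgered disk $D'$ contains the outermost subdisk $C\subset E_0$, and $C$ may meet the link disks $g(w)$, so $D'$ need not be disjoint from them and the star--link homotopy is blocked.

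Your proposed fix, however, does not close this gap. If you first surger a link disk $g(w)$ along a subdisk of $E_0$, the resulting disk contains a piece of $E_0$ that can meet $g(u)$ for a vertex $u$ adjacent to $w$ but lying \emph{outside} the star of $v_0$; the modified map then fails to be simplicial along the edge $\{w,u\}$, and you have merely pushed the problem outward. Nothing in your nested induction controls this cascade, and as written the argument need not terminate.

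The resolution in the literature avoids touching the link disks at all: rather than fixing $v_0$ in advance, one chooses an arc $\alpha$ that is outermost in $E_0$ among the traces of \emph{all} the $g(v)$, and then takes $v_0$ to be a vertex with $\alpha\subset g(v_0)\cap E_0$. Any $g(w)$ with $w$ adjacent to $v_0$ is disjoint from $g(v_0)$, so its arcs on $E_0$ cannot cross $\alpha$; combined with the global outermost choice they cannot enter $C$, and the surgered disk $D'$ is automatically disjoint from every neighbour of $g(v_0)$. The homotopy then goes through directly, with no auxiliary surgeries on the link. (Some care is needed to make ``outermost among all $g(v)$'' precise when non-adjacent disks cross; see Cho's paper, or McCullough's original argument for the full disk complex, for the bookkeeping.)
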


\begin{lemma} \label{cancelingkashuku}
The canceling disk complex $\mathcal{CD}(\vtil, C_{T_1})$ is contractible.
\end{lemma}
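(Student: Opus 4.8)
The plan is to apply Cho's criterion (Theorem~\ref{chokashuku}) to the full subcomplex $\mathcal{K} = \mathcal{CD}(\vtil, C_{T_1})$ of $\mathcal{D}(\vtil)$. Since the canceling disk complex is $1$-dimensional by Lemma~\ref{canceling1}, once we show it is contractible, it is automatically a tree. So the entire content of the lemma reduces to verifying the surgery condition: given two canceling disks $D$ and $E$ for $C_{T_1}$ with $D \cap E \neq \emptyset$, and an outermost arc $\alpha$ of $D \cap E$ in $D$ cutting off a subdisk $C \subset D$ with $C \cap E = \alpha$, at least one of the two disks $F_1 = E_1 \cup C$, $F_2 = E_2 \cup C$ obtained by surgery on $E$ along $C$ is again a canceling disk for $C_{T_1}$.

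First I would recall that $D$ and $E$ each meet $C_{T_1}$ transversely in exactly one point, so $|D \cap C_{T_1}| = |E \cap C_{T_1}| = 1$; in particular both are non-separating in $\vtil$. The surgery replaces $E$ by $F_1$ and $F_2$, and a standard count gives $|\partial F_1 \cap C_{T_1}| + |\partial F_2 \cap C_{T_1}| = |\partial E \cap C_{T_1}| = 1$ (the arc $\alpha$ lies in the interior of $\vtil$ and the subdisk $C$ of $D$ does not meet $C_{T_1}$ because it is disjoint from $\partial D \supset D\cap C_{T_1}$ — here one must be slightly careful and choose $\alpha$, hence $C$, to avoid the single intersection point of $D$ with $C_{T_1}$, which is possible since that point lies on $\partial D$ while $\alpha$ is an interior arc). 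Hence exactly one of $F_1, F_2$, say $F_1$, meets $C_{T_1}$ once and the other is disjoint from $C_{T_1}$. So $F_1$ is a candidate canceling disk provided it is \emph{essential} in $\vtil$, i.e. not boundary-parallel. The remaining work is to rule out that $F_1$ is inessential.

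The main obstacle is precisely this last point: showing that $F_1$ (the summand meeting $C_{T_1}$ once) is essential. If $F_1$ were boundary-parallel in $\vtil$, it would cut off a $3$-ball, and I would derive a contradiction by tracking $C_{T_1}$: since $C_{T_1}$ is the belt circle of the $2$-handle reattaching to recover $(V_1, T_1)$, an inessential $F_1$ meeting $C_{T_1}$ once would force $C_{T_1}$ to be isotopic into that $3$-ball, contradicting that $C_{T_1}$ is a non-trivial curve linking the tangle; more concretely one argues from the fact that $E$ was essential and non-separating, and surgery along an outermost subdisk of $D$ cannot produce a trivial non-separating disk — a trivial disk is separating. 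Indeed $F_1$ is non-separating (it meets $C_{T_1}$ once, as does any canceling disk, and a separating disk meets any curve an even number of times), and a boundary-parallel disk is separating, so $F_1$ cannot be boundary-parallel. Therefore $F_1$ is essential and represents a vertex of $\mathcal{CD}(\vtil, C_{T_1})$, verifying Cho's condition. By Theorem~\ref{chokashuku}, $\mathcal{CD}(\vtil, C_{T_1})$ is contractible.

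\begin{proof}
We verify that $\mathcal{K} := \mathcal{CD}(\vtil, C_{T_1})$ satisfies the hypothesis of Theorem~\ref{chokashuku}. Let $D$ and $E$ be canceling disks for $C_{T_1}$ with $D \cap E \neq \emptyset$, let $\alpha$ be an outermost arc of $D \cap E$ in $D$, and let $C \subset D$ be the subdisk with $C \cap E = \alpha$. Since $\partial C \setminus \alpha \subset \partial D$ and $\alpha$ is an arc in the interior of $\vtil$ with $\partial \alpha \subset \partial E \setminus C_{T_1}$, we may take $\alpha$ so that $C$ misses the single point $D \cap C_{T_1}$; thus $C \cap C_{T_1} = \emptyset$. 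Let $F_1, F_2$ be the disks obtained by surgery on $E$ along $C$. Then $|\partial F_1 \cap C_{T_1}| + |\partial F_2 \cap C_{T_1}| = |\partial E \cap C_{T_1}| = 1$, so exactly one of them, say $F_1$, satisfies $|\partial F_1 \cap C_{T_1}| = 1$ after a small isotopy removing any bigons, while $\partial F_2 \cap C_{T_1} = \emptyset$. In particular $F_1$ is non-separating in $\vtil$, since a separating disk meets the closed curve $C_{T_1}$ in an even number of points. A boundary-parallel disk is separating, so $F_1$ is essential, and hence $F_1$ represents a vertex of $\mathcal{K}$. This establishes the condition in Theorem~\ref{chokashuku}, so $\mathcal{CD}(\vtil, C_{T_1})$ is contractible.
\end{proof}
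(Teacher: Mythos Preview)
Your approach is the same as the paper's: verify Cho's surgery criterion. There is, however, a small but genuine gap. The condition in Theorem~\ref{chokashuku} must hold for \emph{every} outermost arc $\alpha$ of $D\cap E$ in $D$, not merely for one of your choosing. Your sentence ``we may take $\alpha$ so that $C$ misses the single point $D\cap C_{T_1}$'' is therefore not legitimate: once $D$ and $E$ are fixed, an outermost subdisk $C$ may very well contain the point $D\cap C_{T_1}$ on its boundary arc $\partial C\cap\partial D$, and you must verify the criterion for that $\alpha$ as well. (The justification you give, that $\partial\alpha\subset\partial E\setminus C_{T_1}$, concerns where the endpoints of $\alpha$ sit on $\partial E$ and says nothing about whether $\partial C\cap\partial D$ avoids the point $D\cap C_{T_1}$.) Correspondingly, your count $|\partial F_1\cap C_{T_1}|+|\partial F_2\cap C_{T_1}|=|\partial E\cap C_{T_1}|=1$ is only valid under the unjustified assumption $C\cap C_{T_1}=\emptyset$; in general the arc $\partial C\cap\partial D$ contributes to both $\partial F_1$ and $\partial F_2$.

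The fix is immediate and is exactly what the paper does: treat the two cases $|C\cap C_{T_1}|=0$ and $|C\cap C_{T_1}|=1$ separately. In the first case your $F_1=E_1\cup C$ meets $C_{T_1}$ once; in the second, $F_2=E_2\cup C$ does (since $E_2\cap C_{T_1}=\emptyset$ while $C$ contributes the single point). Your non-separating argument for essentiality then applies verbatim to whichever of $F_1,F_2$ meets $C_{T_1}$ once, and the proof goes through. Incidentally, that explicit observation---a disk meeting $C_{T_1}$ exactly once is non-separating and hence essential---is a detail the paper leaves implicit.
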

\begin{proof}
Since $\mathcal{CD}(\vtil, C_{T_1})$ is a full subcomplex of the disk complex $\mathcal{D} (\vtil)$, 
it suffices to show that $\mathcal{CD}(\vtil, C_{T_1})$ satisfies the condition 
in Theorem~\ref{chokashuku}. 
Let $D$ and $E$ be canceling disks of $C_{T_1}$ such that $D \cap E \neq \emptyset$.  
Let $\alpha$ be an outermost arc of $D \cap E$ in $D$, and $C$ a subdisk of $D$ with $C \cap E = \alpha$. 
The arc $\alpha$ divides $E$ into two disks $E_1$ and $E_2$. 
Set $F_1 := E_1 \cup C$ and $F_2 := E_2 \cup C$, which are disks obtained by surgery on $E$ along $C$. 
Since $D$ and $E$ are canceling disks of $C_{T_1}$, each of $\partial D$ and $\partial E$ intersects $C_{T_1}$ transversely at exactly one point. 
Thus, without loss of generality, we can assume that $E_1$ intersects $C_{T_1} $ at a point, and $E_2 \cap C_{T_1} = \emptyset$. 
Now, if $C \cap C_{T_1} = \emptyset$, then $F_1$ is a canceling disk of $(\vtil, C_{T_1})$, and 
if $C$ intersects $C_{T_1} $ at a point, then $F_2$ is a canceling disk of $(\vtil, C_{T_1})$. 
\end{proof}

From Lemmas~\ref{canceling1} and~\ref{cancelingkashuku}, we have the following. 

\begin{proposition}\label{tree}
The canceling disk complex $\mathcal{CD}(\vtil, C_{T_1})$ is a tree. 
\end{proposition}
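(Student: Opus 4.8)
The plan is to combine the two preceding lemmas directly. By Lemma~\ref{canceling1}, the complex $\mathcal{CD}(\vtil, C_{T_1})$ is $1$-dimensional, so it is a graph. By Lemma~\ref{cancelingkashuku}, it is contractible. A connected graph that is contractible has trivial fundamental group, hence contains no cycles, and a connected acyclic graph is by definition a tree. So the proof amounts to observing that a $1$-dimensional contractible simplicial complex is a tree.

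The only genuine point requiring a word of care is \emph{connectedness}: strictly speaking, contractibility already implies connectedness, so this is not really a separate issue — a contractible space is path-connected. Thus I would simply say: ``By Lemma~\ref{canceling1} the complex $\mathcal{CD}(\vtil, C_{T_1})$ is a graph, and by Lemma~\ref{cancelingkashuku} it is contractible; in particular it is connected and simply connected, hence has no embedded cycles, so it is a tree.''

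I expect no real obstacle here — this is a one-line deduction from the two lemmas. If one wanted to be slightly more explicit, one could note that a nonempty cycle in a graph would give a nontrivial element of $H_1$ (or of $\pi_1$), contradicting contractibility, so the underlying graph must be a forest; connectedness (again from contractibility) then upgrades ``forest'' to ``tree.'' That is the entire argument.

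\begin{proof}
By Lemma~\ref{canceling1}, the complex $\mathcal{CD}(\vtil, C_{T_1})$ is $1$-dimensional, that is, it is a graph.
By Lemma~\ref{cancelingkashuku}, it is contractible; in particular it is connected and simply connected.
A connected simply connected graph contains no embedded cycle, for such a cycle would represent a nontrivial element of the fundamental group.
Hence $\mathcal{CD}(\vtil, C_{T_1})$ is a connected acyclic graph, that is, a tree.
\end{proof}
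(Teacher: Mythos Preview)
Your proof is correct and follows exactly the paper's approach: the paper simply states that the proposition follows from Lemmas~\ref{canceling1} and~\ref{cancelingkashuku}, and your argument spells out the (standard) observation that a $1$-dimensional contractible simplicial complex is a tree.
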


Let $\mathcal{T}$ be the first barycentric subdivision of $\mathcal{CD}(\vtil, C_{T_1})$. 
This is a bipartite graph, with ``white" vertices of valence two represented
by pairs of canceling disks of $(\vtil, C_{T_1})$, and 
``black" vertices of countably infinite valence represented by a single canceling disk of $(\vtil, C_{T_1})$.
It is straightforward to see that the group $\MCG(\vtil, C_{T_1})$ acts on $\mathcal{T}$ simplicially, and 
the quotient $ \mathcal{T} / \MCG(\vtil, C_{T_1})$ is a single edge with two endpoints, 
one is represented by a ``white" vertex and the other is represented by a ``black" vertex, see Figure ~\ref{figure:quotientT}. 
Fix a pair $\{ D, E \}$ of distinct, disjoint canceling disks for $C_{T_1}$. 
\begin{figure}[htbp]
\centering\includegraphics[width=2.5cm]{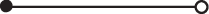}
\begin{picture}(400,0)(0,0)
\put(158,0){$[D]$}
\put(212,0){$[ \{ D , E \}]$}
\end{picture}
\caption{The quotient graph $\mathcal{T} / \MCG(\vtil, C_{T_1})$.}
\label{figure:quotientT}
\end{figure}
By Bass-Serre theory~\cite{Serre} for groups acting on trees, we have 
$\MCG(\vtil, C_{T_1}) = G_{[D]} *_{ G_{ ( [D], [E] )} } G_{ [ \{D, E \} ] }$, where  
\begin{itemize}
\item
$G_{[D]} := \MCG (\vtil , C_{T_1}, D) < \MCG (\vtil, C_{T_1})$; 
\item
$G_{ [ \{D, E\} ] } := \MCG (\vtil , C_{T_1}, D \cup E) < \MCG (\vtil, C_{T_1})$; and 
\item
$G_{ [( D, E )]} := \MCG (\vtil , C_{T_1}, D , E) < \MCG (\vtil, C_{T_1})$.
\end{itemize}

Let $m_1$ be the boundary of a non-separating disk of $\vtil$ disjoint from $C_{T_1}$. 
This is uniquely determined by Corollary~2.2 of Funayoshi--Koda~\cite{Funayoshi-Koda}. 
We define four specific elements $\alpha$, $\beta_D$, $\tau$, $\gamma_{\{D, E\}}$ of $\MCG(\vtil, C_{T_1})$ 
as follows. 

\begin{itemize}
\item 
The map $\alpha$ is an extension of the hyperelliptic involution of $\partial \vtil$. 
See the left top in Figure~\ref{figure:generators}.
\item 
Set $\ell_D := \partial (\Nbd (\partial D \cup C_{T_1}; \partial \vtil))$. 
The map $\beta_D$ is defined by a half-twist along $\ell_D$ as shown on the right top in Figure~\ref{figure:generators}. 
\item 
The map $\tau$ is an extension of the Dehn twist along $m_1$. See the left bottom in Figure~\ref{figure:generators}.
\item 
We denote the two simple closed curves 
$\partial (\Nbd ((\partial D \cup \partial E) \cup C_{T_1}; \partial \vtil))$ in $\partial \vtil$ 
by $\ell_{\{D, E\}}$ and $\ell'_{\{D, E\}}$. 
Then the map $\gamma_{\{D, E\}}$ is defined by a half-twist along $\ell_{\{D, E\}} \cup \ell'_{\{D, E\}}$ 
as shown on the right bottom in Figure~\ref{figure:generators}. 
\end{itemize}
\begin{figure}[htbp]
\centering\includegraphics[width=13cm]{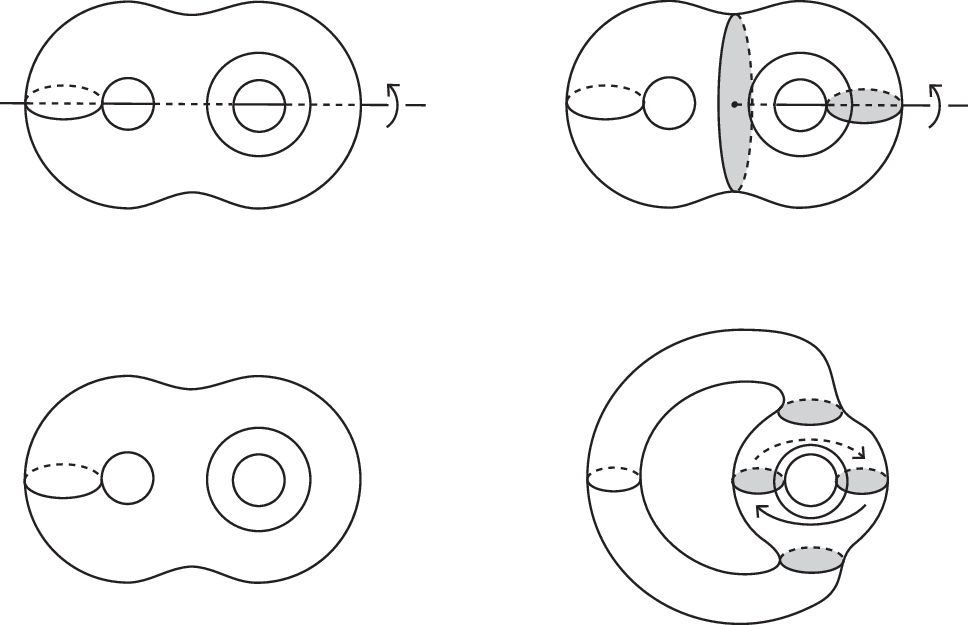}
\begin{picture}(400,0)(0,0)
\put(83,150){$\alpha$}
\put(170,200){$\pi$}
\put(107,180){$C_{T_1}$}
\put(35,197){$m_1$}
\put(58,180){$\vtil$}

\put(290,150){$\beta_D$}
\put(376,200){$\pi$}
\put(343,193){$D$}
\put(315,180){$C_{T_1}$}
\put(275,230){$\ell_D$}
\put(240,197){$m_1$}
\put(263,180){$\vtil$}

\put(83,0){$\tau$}
\put(107,37){$C_{T_1}$}
\put(35,52){$m_1$}
\put(58,37){$\vtil$}

\put(285,0){$\gamma_{\{ D, E \} }$}
\put(245,55){$m_1$}
\put(282,64){$E$}
\put(358,64){$D$}
\put(340,97){$\ell_{D,E}$}
\put(340,30){$\ell'_{D,E}$}
\put(280,20){$\vtil$}

\end{picture}
\caption{The four elements $\alpha, \beta_D, \tau, \gamma_{\{D, E\}} \in \MCG(\vtil, C_{T_1})$.}
\label{figure:generators}
\end{figure}
Note that the orders of $\alpha$ and $\gamma_{\{D, E\}}$ are both two, whereas 
 $\beta_D$ and $\tau$ have infinite order.

\begin{lemma}\label{G_D}
\begin{enumerate}
\item
$G_{[D]} = \ZZ / 2 \ZZ \langle \alpha \rangle \times \ZZ \langle \beta_D \rangle \times \ZZ \langle \tau \rangle$. 
\item
$G_{ [ \{D, E\} ] } = \ZZ / 2 \ZZ \langle \alpha \rangle \times \ZZ / 2 \ZZ \langle \gamma_{\{D, E\}} \rangle \times \ZZ \langle \tau \rangle$. 
\item
$G_{ [ ( D, E ) ] } = \ZZ / 2 \ZZ \langle \alpha \rangle \times \ZZ \langle \tau \rangle$.
\end{enumerate}
\end{lemma}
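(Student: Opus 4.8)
The plan is to analyze each of the three groups $G_{[D]}$, $G_{[\{D,E\}]}$, $G_{[(D,E)]}$ by cutting the handlebody $\vtil$ along the relevant canceling disk(s) and reducing to a mapping class group of a simpler piece, namely a solid torus (or a ball) with a marked arc or circle, whose mapping class group is classically known. First I would treat $G_{[(D,E)]}$: since $D$ and $E$ are disjoint canceling disks for $C_{T_1}$, cutting $\vtil$ along $D \cup E$ yields a $3$-ball in which $C_{T_1}$ becomes a trivial arc together with the scars of $D$ and $E$; keeping track of the boundary pattern, an element of $G_{[(D,E)]}$ is determined by its action on this ball rel the scars, and one checks this mapping class group is $\ZZ/2\ZZ \times \ZZ$, generated by (the restriction of) the hyperelliptic involution $\alpha$ and the twist $\tau$ along $m_1$. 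The curve $m_1$ survives in the cut-open ball as the boundary of a disk separating the two scars, which makes $\tau$ visibly of infinite order and central, while $\alpha$ is the obvious order-two symmetry; that $\langle \alpha, \tau\rangle$ exhausts the group is the content of a direct (but routine) computation with the mapping class group of a twice-punctured disk. One must also verify $\alpha$ and $\tau$ commute and generate a group isomorphic to $\ZZ/2\ZZ \times \ZZ$ rather than an infinite dihedral group — this follows because $\alpha$ fixes $m_1$ preserving orientation, so $\alpha \tau \alpha^{-1} = \tau$.

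Next I would handle $G_{[D]} = \MCG(\vtil, C_{T_1}, D)$. Cutting $\vtil$ along the single disk $D$ produces a solid torus $W$ in which $C_{T_1}$ becomes an arc with both endpoints on the annular scar of $D$; equivalently one is looking at the mapping class group of a solid torus with a properly embedded boundary-parallel arc and a marked disk scar. The extra freedom compared to $G_{[(D,E)]}$ is precisely a Dehn twist along the frontier annulus $\Nbd(\partial D \cup C_{T_1}; \partial \vtil)$, i.e. the half-twist $\beta_D$, which has infinite order. I would argue that $\alpha$, $\beta_D$, $\tau$ pairwise commute — $\tau$ is supported near $m_1$ away from the support of $\beta_D$, $\alpha$ conjugates each of the others to itself — and that they generate $G_{[D]}$ by showing any element, after correcting by a power of $\beta_D$ to fix $\partial D$ pointwise (together with the boundary pattern), lands in $G_{[(D,E)]}$ for a suitable companion disk $E$, whence part (3) finishes the argument. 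The direct-product structure then follows from checking the three infinite/finite-order generators have no relations beyond $\alpha^2 = 1$ and commutativity, which one sees by recording the induced action on $H_1(\partial\vtil)$ or on the set of isotopy classes of canceling disks.

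Finally, $G_{[\{D,E\}]} = \MCG(\vtil, C_{T_1}, D \cup E)$ is handled the same way as $G_{[D]}$ except that now the homeomorphism is allowed to swap $D$ and $E$. The subgroup fixing each of $D$, $E$ individually is $G_{[(D,E)]} = \ZZ/2\ZZ\langle\alpha\rangle \times \ZZ\langle\tau\rangle$ by part (3), and it has index two in $G_{[\{D,E\}]}$; the element $\gamma_{\{D,E\}}$, the half-twist along $\ell_{\{D,E\}} \cup \ell'_{\{D,E\}}$, realizes the swap and squares to the identity. So $G_{[\{D,E\}]}$ is generated by $\alpha$, $\tau$, $\gamma_{\{D,E\}}$; I would check $\gamma_{\{D,E\}}$ commutes with both $\alpha$ and $\tau$ (again $\tau$ is supported near $m_1$, disjoint from the support of $\gamma_{\{D,E\}}$, and $\alpha$ preserves the swap), giving the asserted direct product $\ZZ/2\ZZ\langle\alpha\rangle \times \ZZ/2\ZZ\langle\gamma_{\{D,E\}}\rangle \times \ZZ\langle\tau\rangle$.

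I expect the main obstacle to be the exhaustiveness claims — showing that the listed elements really generate each group, as opposed to merely lying in it. This amounts to understanding the mapping class group of a solid torus (resp. a $3$-ball) equipped with a properly embedded arc and one or two disk scars as boundary pattern, and one has to be careful about which ambient isotopies are permitted (the isotopy must preserve $C_{T_1}$ and the disk(s) setwise, not pointwise). The cleanest route is probably to pass to the cut-open manifold, invoke the known structure of the mapping class group of a punctured disk / once-punctured torus together with the Alexander-type lemma that a homeomorphism of a ball fixing the boundary is isotopic to the identity, and then lift back, verifying at each stage that the kernel and cokernel of the relevant restriction maps are accounted for by $\alpha$, $\beta_D$, $\tau$, $\gamma_{\{D,E\}}$.
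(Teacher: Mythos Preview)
Your approach is viable and arrives at the same conclusions, but it differs from the paper's in two notable ways. First, the paper cuts $\vtil$ along the \emph{separating} disk bounded by $\ell_D = \partial(\Nbd(\partial D \cup C_{T_1}))$, splitting it into two solid tori $X_1$ (containing $m_1$) and $X_2$ (containing $C_{T_1}$); since every element of $G_{[D]}$ preserves each $X_i$ setwise, the group decomposes as the product of the two restricted mapping class groups, which are read off immediately as $\langle \beta', \tau\rangle$ and $\langle \beta\rangle$ (with $\alpha = \beta\beta'$). You instead cut along the canceling disks $D$ and $D\cup E$ themselves, which produces a solid torus or a ball with scar disks and arcs; this works, but one then has to track the regluing compatibility along the paired scars, a step the separating cut avoids entirely. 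Second, the paper runs the logic in the order (1)$\Rightarrow$(2)$\Rightarrow$(3): once $G_{[D]}$ is known, one observes that no nonzero power of $\beta_D$ preserves $E$, which immediately gives $G_{[(D,E)]} = \langle \alpha, \tau\rangle$, and then (2) follows from (3) together with the swap $\gamma$, exactly as in your argument for (2). Your order (3)$\Rightarrow$(1)$\Rightarrow$(2) is fine, but proving (3) first from scratch is the harder direction.

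A couple of small corrections to your sketch: after cutting $\vtil$ along $D\cup E$ the circle $C_{T_1}$ (which meets each of $D$, $E$ once) becomes \emph{two} arcs, and there are \emph{four} disk scars $D^\pm$, $E^\pm$; the curve $m_1$ separates the pair $\{D^+, D^-\}$ from $\{E^+, E^-\}$ on the resulting four-holed sphere. Likewise, after cutting along $D$ alone you get a solid torus with \emph{two} disk scars $D^\pm$ and an arc from one to the other. These do not affect the outcome but should be stated correctly when you carry out the exhaustiveness argument. For the absence of further relations, the paper uses the action on $H_1(\partial\vtil;\ZZ)$ (noting that $\beta_D^2$, being a twist along a separating curve, acts trivially) to force $n=0$ in any relation $\alpha^l\beta^m\tau^n = 1$; this is slightly sharper than your proposed appeal to the action on the set of canceling disks, which by itself does not immediately separate $\tau$ from the identity.
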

\begin{proof}
We set $\beta = \beta_D$, $\gamma = \gamma_{\{D, E\}}$ for simplicity. 
Note that every element of the mapping class group $\MCG (\vtil, C_{T_1})$ preserves $m_1$, for 
the simple closed curve $m_1$ is uniquely determined from $C_{T_1}$ up to isotopy. 

First, we show (1). 
Since the elements of $G_{[D]}$ preserve $C_{T_1}$ and $D$, they also preserve 
$\ell_D = \Nbd(\partial D \cup C_{T_1}; \partial \vtil)$.  
We cut $\vtil$ along the disk bounded by $\ell_D$ into two solid tori $X_1$ and $X_2$, where 
$X_2$ is the one containing $C_{T_1}$ as shown in Figure~\ref{figure:X_1_X_2}. 
Since the elements of $G_{[D]}$ preserve $C_{T_1}$, the solid tori $X_1$ and $X_2$ are not interchanged by elements of $G_{[D]}$.
\begin{figure}[htbp]
\centering\includegraphics[width=8.5cm]{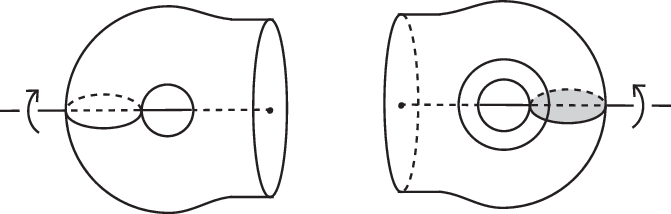}
\begin{picture}(400,0)(0,0)
\put(80,40){$\pi$}
\put(80,60){$\beta'$}
\put(132, 0){$X_1$}
\put(112, 35){$m_1$}
\put(157,70){$\ell_D$}

\put(313,40){$\pi$}
\put(314,60){$\beta$}
\put(252, 0){$X_2$}
\put(252, 25){$C_{T_1}$}
\put(278, 35){$D$}
\put(203,70){$\ell_D$}
\end{picture}
\caption{The decomposition of $\vtil$ into $X_1$ and $X_2$.}
\label{figure:X_1_X_2}
\end{figure}
Note that the disk in $\vtil$ bounded by $m_1$ is the meridian disk of $X_1$. 
First, let $\varphi$ be an element of $G_{[D]}$ that fixes $X_2$. 
Since $\varphi$ preserves $m_1$, $\varphi$ can be written as a product of powers of maps $\beta'$ and $\tau$ shown in Figure~\ref{figure:X_1_X_2}. 
Next, let $\psi$ be an element of $G_{[D]}$ that fixes $X_1$. 
Since $\psi$ preserves both $D$ and $C_{T_1}$, we see that $\psi$ is just a power of  $\beta$. 
Here, we have $\alpha = \beta \beta'$.
Therefore, $G_{[D]}$ is generated by $\alpha$, $\beta$, and $\tau$. 
The elements $\alpha$, $\beta$ and $\tau$ are mutually commutative, and we also have $\alpha^2 = 1$.
We show that there are no other essential relations for $G_{[D]}$.

Choose an arbitrary product $\alpha^l \beta^m \tau^n$ that is the identity in $G_{[D]}$. 
By taking its square, we have $(\beta^2)^{m} \tau^{2n} = 1$.  
Here, $\beta^2$ is nothing but (the extension of the) Dehn twist along the separating simple closed curve $\ell_{D} \subset \partial \vtil$. 
Thus, the isomorphism $(\beta^2)_{*}$ of $H_1(\partial \vtil; \ZZ)$ induced by $\beta^2$ is the identity by 
Farb--Margalit~\cite[Proposition~6.3]{FM12}. 
It follows that the isomorphism $(\tau^{2n})_{*}$ of $H_1(\partial \vtil; \ZZ)$ induced by $\tau^{2n}$ is also the identity, for $(\beta^2)^{m} \tau^{2n} = 1$. 
Then, again by Farb--Margalit~\cite[Proposition~6.3]{FM12}, we have $n=0$. 
From this, we see $m= 0$ and then $l = 0$. 
Consequently, we have 
\begin{align*}
G_{[D]} &= 
\langle 
\alpha, \beta, \tau \mid 
\alpha \beta = \beta \alpha, 
\alpha \tau = \tau \alpha, 
\beta \tau = \tau \beta,
\alpha^2 =1
\rangle\\
&\cong \ZZ/2\ZZ\langle \alpha \rangle \times \ZZ\langle \beta \rangle \times \ZZ \langle \tau \rangle
\end{align*}

Next, we show (2). 
By the proof of (1) and the fact that $\beta^N$ does not preserve $E$ for any nonzero integer $N$, 
we see that the maps preserving both $D$ and $E$ are generated by $\alpha$ and $\tau$. 
Since the map $\gamma$ interchanges $D$ and $E$, $G_{[ \{ D, E \}]}$ is generated by $\alpha$, $\tau$, and $\gamma$.
The elements $\alpha$, $\tau$ and $\gamma$ are mutually commutative, and we have $\alpha^2 = \gamma^2 = 1$. 
We show that there are no other essential relations for $G_{[\{D, E\}]}$. 

Choose an arbitrary product $\alpha^l \gamma^m \tau^n$ that is the identity in $G_{[\{D, E\}]}$. 
By taking its square, we have $\tau^{2n} = 1$. 
Then, since  the order of $\tau$ is infinite by  Farb--Margalit~\cite[Proposition~3.2]{FM12}, we have $n = 0$. 
Consequently, we have 
\begin{align*}
G_{[\{D, E\}]} &= 
\langle 
\alpha, \tau, \gamma \mid 
\alpha \tau = \tau \alpha, 
\alpha \gamma = \gamma \alpha, 
\tau \gamma = \gamma \tau, 
\alpha^2 = 1, 
\gamma^2 =1
\rangle\\
&\cong \ZZ / 2 \ZZ \langle \alpha \rangle \times \ZZ / 2 \ZZ \langle \gamma \rangle \times \ZZ \langle \tau \rangle. 
\end{align*}

Finally, we show (3).
Since the elements of $G_{ [ ( D , E ) ] }$ preserve both $D$ and $E$, 
$G_{ [ ( D, E ) ] }$ is generated by $\alpha$ and $\tau$, as already mentioned in the proof of (2).
Moreover, the relations are only $\alpha \tau = \tau \alpha$ and $\alpha^2 = 1$ essentially. 
Therefore, we have 
\begin{align*}
G_{ [ ( D, E ) ] } &=
\langle 
\alpha, \tau \mid 
\alpha \tau = \tau \alpha, 
\alpha^2 = 1
\rangle
\cong \ZZ / 2 \ZZ \langle \alpha \rangle \times \ZZ \langle \tau \rangle. 
\end{align*}
\end{proof}

\begin{proposition}\label{MCGV_1}
We have $\MCG(V_1, T_1) = \ZZ/2\ZZ \langle \alpha \rangle \times \ZZ \langle \tau \rangle \times \langle \beta, \gamma \mid \gamma^2 = 1 \rangle$.
\end{proposition}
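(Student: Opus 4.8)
The goal is to assemble $\MCG(\vtil, C_{T_1})$ from the amalgamated free product decomposition
\[
\MCG(\vtil, C_{T_1}) = G_{[D]} *_{G_{[(D, E)]}} G_{[\{D, E\}]}
\]
supplied by Proposition~\ref{tree} and Bass--Serre theory, using the explicit descriptions of the three vertex/edge groups obtained in Lemma~\ref{G_D}. First I would record what the amalgamation data looks like concretely: both $G_{[D]}$ and $G_{[\{D, E\}]}$ contain the central subgroup $\ZZ/2\ZZ\langle \alpha\rangle \times \ZZ\langle \tau\rangle$, and this is exactly the edge group $G_{[(D, E)]}$, embedded in the obvious way into each vertex group. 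So the amalgamation is of the form $(A \times H) *_{A} (B \times H)$ where $A = G_{[(D, E)]} = \ZZ/2\ZZ\langle\alpha\rangle \times \ZZ\langle\tau\rangle$, $H$ on the left is $\ZZ\langle\beta\rangle$ (so $G_{[D]} = A \times \ZZ\langle\beta\rangle$), and $H$ on the right is $\ZZ/2\ZZ\langle\gamma\rangle$ (so $G_{[\{D, E\}]} = A \times \ZZ/2\ZZ\langle\gamma\rangle$).

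The key algebraic step is the standard fact that amalgamating two direct products over their common factor distributes: $(A \times P) *_A (A \times Q) \cong A \times (P * Q)$. I would prove this directly from the universal property (or from presentations): take presentations $\langle S_A \mid R_A\rangle$, $\langle \beta \mid\ \rangle$, $\langle \gamma \mid \gamma^2\rangle$ for $A$, $\ZZ\langle\beta\rangle$, $\ZZ/2\ZZ\langle\gamma\rangle$ respectively, with $\beta$ and $\gamma$ each commuting with all of $S_A$ inside the respective vertex group. The amalgamated product presentation is then $\langle S_A, \beta, \gamma \mid R_A,\ [\beta, S_A],\ [\gamma, S_A],\ \gamma^2\rangle$, since the edge relations merely identify the two copies of $S_A$. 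This presentation visibly splits as the direct product of $A = \langle S_A \mid R_A\rangle$ with $\langle \beta, \gamma \mid \gamma^2\rangle$, because every generator of $A$ commutes with both $\beta$ and $\gamma$ and there are no other relations coupling the two factors. Substituting $A = \ZZ/2\ZZ\langle\alpha\rangle \times \ZZ\langle\tau\rangle$ and noting $\alpha, \tau$ are also central in the whole group (the centrality of $\tau$ having already been used throughout, since $m_1$ is canonical, and $\alpha$ being a central involution), we get exactly
\[
\MCG(V_1, T_1) \cong \ZZ/2\ZZ\langle\alpha\rangle \times \ZZ\langle\tau\rangle \times \langle\beta, \gamma \mid \gamma^2 = 1\rangle,
\]
using the identification $\MCG(V_1, T_1) \cong \MCG(\vtil, C_{T_1})$.

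The main obstacle, such as it is, is purely one of bookkeeping: one must be careful that the embeddings $G_{[(D,E)]} \hookrightarrow G_{[D]}$ and $G_{[(D,E)]} \hookrightarrow G_{[\{D,E\}]}$ really do send $\alpha \mapsto \alpha$ and $\tau \mapsto \tau$ with no hidden twisting (this is clear from the geometric definitions of these maps in Figure~\ref{figure:generators}, since $\alpha$, $\tau$, $\beta$, $\gamma$ are all defined once and for all on $\vtil$, not separately in each subgroup), and that $\beta \notin G_{[\{D,E\}]}$ and $\gamma \notin G_{[D]}$ so that no further collapsing occurs. Both points follow immediately from Lemma~\ref{G_D} and its proof. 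One could alternatively avoid invoking the distributivity lemma abstractly and instead just write down the presentation of the amalgam directly from the three presentations in Lemma~\ref{G_D}, then simplify; this is essentially the same argument but keeps everything self-contained. I would present it that way for transparency.
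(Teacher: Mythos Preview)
Your proposal is correct and follows essentially the same approach as the paper: both use the Bass--Serre amalgam $G_{[D]} *_{G_{[(D,E)]}} G_{[\{D,E\}]}$ together with the presentations from Lemma~\ref{G_D}, then read off the direct-product splitting from the resulting presentation. The paper does this in one line by writing out the amalgamated presentation explicitly, which is precisely the self-contained variant you describe at the end of your plan.
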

\begin{proof}
Set $\beta = \beta_D$ and $\gamma = \gamma_{\{D, E\}}$. 
By Lemma~\ref{G_D}, we have 
\begin{align*}
\MCG(V_1, T_1) 
&\cong \MCG(\vtil, C_{T_1}) = G_{[D]} \ast_{G_{ [ ( D , E ) ] }} G_{[\{ D, E \}]}\\
&= \left\langle
\alpha, \beta, \tau, \gamma ~\left| ~
\begin{array}{l}
\alpha \beta = \beta \alpha,
\alpha \tau = \tau \alpha,
\alpha \gamma = \gamma \alpha, \\
\beta \tau = \tau \beta,
\tau \gamma = \gamma \tau,\alpha^2 =1, \gamma^2 = 1
\end{array} \right. \right\rangle\\
&\cong \ZZ/2\ZZ \langle \alpha \rangle \times \ZZ \langle \tau \rangle \times \langle \beta, \gamma \mid \gamma^2 = 1 \rangle .
\end{align*}
\end{proof}

\section{Proof of Main Theorem}
\label{sec: Proof of Main Theorem}

We first give the definition of the $(1, 1)$-decomposition $(M, K_{p/q}; \Sigma_{p/q})$ mentioned in Theorem \ref{shukekka}. 

\begin{definition}\label{Kpq}
Let $p$ and $q$ be coprime integers, where $p$ is a non-zero even number.
For $i = 1, 2$, let $V_i$ be the solid torus and $T_i$ be the trivial $1$-tangle in $V_i$. 
Let $m \subset \partial V_1$ be the meridian of $V_1$ containing $\partial T_1 = \{ t, t' \}$. 
Let $f : \RR^2 \to \partial V_1( = \RR^2 / \ZZ^2) $ be the universal covering map, where 
we assume that $f^{-1} (m) = \RR \times \ZZ$,  
$f^{-1}( t ) = \ZZ^2$, $f^{-1}( t' ) = \{ n+\frac{1}{2} \mid n \in \ZZ \} \times \ZZ$. 
Let $L_{p/q}$ be the line in $\RR^2$ defined by $y= \frac{p}{q} x  + \sqrt{2}$. 
Then the image $f \left( L_{p/q} \right)$ is a simple closed curve in $\partial V_1 - \{ t, t' \}$, see Figure ~\ref{figure:Kpqdef}. 
\begin{figure}[htbp]
\centering\includegraphics[width=10cm]{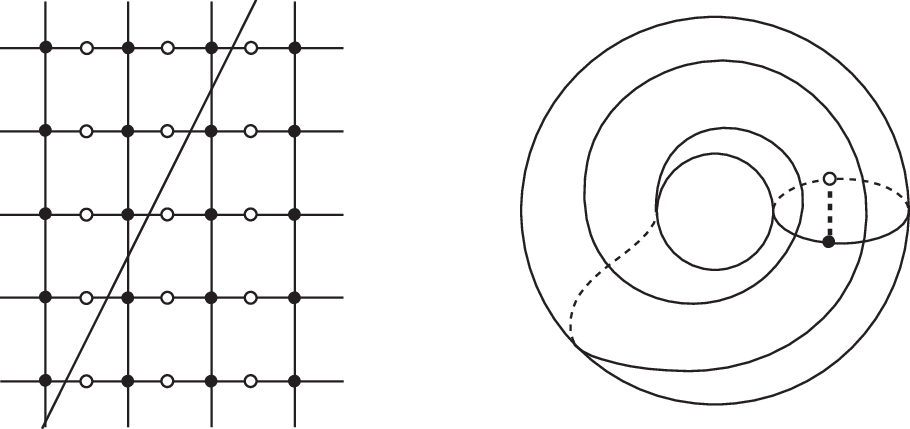}
\begin{picture}(400,0)(0,0)

\put(101,65){$L_{p/q}$}
\put(105,0){$\RR^2$}

\put(185,77){$\xrightarrow{f}$}

\put(345,79){$m$}
\put(197,33){$f(L_{p/q})$}
\put(314,60){$t$}
\put(314,96){$t'$}
\put(275,5){$V_1$}
\end{picture}
\caption{The line $ L_{p/q} \subset \RR^2$ and the simple closed curve $f \left( L_{p/q} \right) \subset \partial V_1- \{t, t' \}$.}
\label{figure:Kpqdef}
\end{figure}
Let $m_2$ be the boundary of a non-separating disk of $V_2$ disjoint from $T_2$. 
Then, by attaching $\partial V_2$ to $\partial V_1$ so that 
$m_2$ and $f \left( L_{p/q} \right)$ coincide and  
the endpoints of $T_1$ and $T_2$ meet each other, 
we get a $(1,1)$-decomposition of a certain knot $K_{p/q}$ in the lens space $L(p, q)$. 
By setting $\Sigma_{p/q} := \partial V_1 = \partial V_2 \subset L(p, q)$, we can write the 
$(1,1)$-decomposition thus obtained by $(L(p, q), K_{p/q}; \Sigma_{p/q})$. 
\end{definition}

We shall prove several lemmas that will lead to a proof of the main theorem. 
In the following arguments, for a $(1,1)$-decomposition $(V_1, V_2, K; \Sigma)$ of a knot $K$ in a closed orientable $3$-manifold $M$, 
we use the following symbols. 
\begin{itemize}
\item
$T_i := V_i \cap L$ ($i=1 , 2$). 
\item
$\widetilde{V}_i$ ($i=1 , 2$) denotes the exteriof of $T_i \subset V_i$, which is a handlebody of genus two. 
\item
$m_i$ ($i=1,2$) denotes the boundary of the unique non-separating disk in $\widetilde{V}_i$ disjoint from $C_{T_i}$. 
This is sometimes identified with the (unique) meridian disk of the solid torus $V_i$ disjoint from $T_i$ in a natural way. 
\end{itemize}

\begin{lemma}\label{m1m2lem}
Let $(M, K; \Sigma)$ be a $(1, 1)$-decomposition of a knot $K$ in a closed orientable $3$-manifold $M$. 
If $m_1 \cap m_2 \neq \emptyset$, then 
there exist at most two canceling disks for $(\vtil, C_{T_1})$ in $\vtil$ 
that intersect $m_2$ minimally among all canceling disks for $(\vtil, C_{T_1})$ in $\vtil$.  
\end{lemma}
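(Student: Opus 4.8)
The plan is to argue by contradiction and reduce the count to a statement about pairwise disjointness, exploiting that $\mathcal{CD}(\vtil, C_{T_1})$ is $1$-dimensional. Write $n$ for the minimum of $|D \cap m_2|$ over all canceling disks $D$ for $C_{T_1}$, where throughout we isotope $D$, $m_2$ and whatever other disk is under consideration into pairwise minimal position, and we regard $m_2$ as a simple closed curve in $\partial\vtil$ disjoint from $C_{T_1}$. Since by Lemma~\ref{canceling1} three distinct canceling disks cannot be pairwise disjoint, it suffices to prove the following claim: \emph{any two canceling disks $D$, $D'$ for $C_{T_1}$ with $|D\cap m_2| = |D'\cap m_2| = n$ can be isotoped off one another}; indeed, a third disk realizing $n$ would then be disjoint from both and the three would span a $2$-simplex of $\mathcal{CD}(\vtil, C_{T_1})$.

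To prove the claim, suppose $D\cap D'\neq\emptyset$. On $D'$ look at the $1$-manifold cut out by $D\cup m_2$ and take an outermost arc, which bounds a subdisk $C\subset D'$ meeting $D\cup m_2$ only along $C\cap\partial D'$ in its frontier arc. If that frontier arc lies on $D$, then $C\cap m_2=\emptyset$ and $C\cap D$ is a single arc, so the surgery on $D$ along $C$ used in the proofs of Lemmas~\ref{cancelingkashuku} and~\ref{G_D} produces a canceling disk $F$ for $C_{T_1}$ with $F\cap m_2\subset D\cap m_2$; hence $|F\cap m_2|\le n$, so $|F\cap m_2|=n$ by minimality, while $F$ meets $D'$ in strictly fewer arcs than $D$ does. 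Repeating this move — passing to the analogous outermost arc on $D$ when the one on $D'$ happens to lie on $m_2$ — strictly decreases the number of intersections with $D'$ while staying among disks realizing the minimum, and, combined with the tree structure of $\mathcal{CD}(\vtil, C_{T_1})$, this forces $D$ and $D'$ to have been disjoint to begin with.

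The hard part is the case in which every outermost arc of $D\cup m_2$ on $D'$ \emph{and}, symmetrically, of $D'\cup m_2$ on $D$ lies on $m_2$: then the outermost subdisk is an essential disk of $\vtil$ bounded by an arc of $m_2$ and an arc of the boundary of the other disk, and the surgery above is not directly available. This is precisely where the hypothesis $m_1\cap m_2\neq\emptyset$ enters. It rules out that $m_2$ bounds a disk in $\vtil$ disjoint from $C_{T_1}$ — otherwise $n=0$ and every canceling disk with boundary disjoint from $m_2$ would realize the minimum, so there could be infinitely many minimizers — and, after cutting $\vtil$ along the unique non-separating disk bounded by $m_1$ into the solid torus in which $C_{T_1}$ is a longitude, it locates $m_2$ sharply enough to exclude the offending essential subdisk. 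Carrying out this degenerate-case analysis, together with the bookkeeping on $\mathcal{CD}(\vtil, C_{T_1})$ needed to upgrade ``some minimizer can be pushed off $D'$'' to ``$D$ and $D'$ are disjoint'', is where the real work lies.
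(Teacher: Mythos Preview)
Your proposal is an outline, not a proof: you explicitly concede that both the ``degenerate case'' (all outermost pieces coming from $m_2$) and the ``bookkeeping'' needed to upgrade \emph{some minimizer can be pushed off $D'$} to \emph{$D$ and $D'$ are disjoint} are ``where the real work lies,'' and neither step is carried out. There is also a dimensional slip that undermines the dichotomy you set up. The curve $m_2$ lies on $\partial\vtil$ and does \emph{not} bound a disk in $\vtil$ (indeed $m_1$ is the unique non-separating curve on $\partial\vtil$ disjoint from $C_{T_1}$ that does), so $m_2\cap D'$ is a finite set of points on $\partial D'$, not a family of arcs in $D'$. Thus there is no ``outermost arc of $D\cup m_2$ on $D'$ lying on $m_2$'' in any meaningful sense, and the ``essential subdisk bounded by an arc of $m_2$ and an arc of $\partial D$'' that you invoke in the hard case simply does not exist. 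Even in the easy case, the surgery produces a \emph{new} minimizer $F$ with $|F\cap D'|<|D\cap D'|$; the tree structure of $\mathcal{CD}(\vtil,C_{T_1})$ alone does not force $F=D$, so the induction does not prove the claim as stated.

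The paper's argument is entirely different and much shorter. Since every element of $\MCG(\vtil,C_{T_1})$ preserves $m_1$, all canceling disks have boundary disjoint from $m_1$. Cut $\partial\vtil$ along $m_1$ to obtain a two-holed torus $\Sigma'$, and then along $C_{T_1}$ to obtain a four-holed sphere $\Sigma''$ with boundary circles $m_1^{\pm}$ and $C_{T_1}^{\pm}$. In $\Sigma''$ the boundary of any canceling disk becomes a single arc joining $C_{T_1}^{+}$ to $C_{T_1}^{-}$, while the hypothesis $m_1\cap m_2\neq\emptyset$ guarantees that $m_2$ becomes a collection of essential arcs with all endpoints on $m_1^{\pm}$. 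One then reads off directly from this planar picture that among arcs from $C_{T_1}^{+}$ to $C_{T_1}^{-}$ at most two minimize intersection with the $m_2$-arcs. No surgery, no induction, and no appeal to Lemma~\ref{canceling1} are needed.
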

\begin{proof}
Let $\Sigma'$ be a torus with two holes obtained by cutting $\partial \vtil$ along $m_1$. 
We denote by $m_1^{\pm}$ the two boundary components of $\Sigma'$. 
Then $\Sigma' \cap m_2$ consists of essential arcs in $\Sigma'$ as shown on the left in 
Figure~\ref{figure:sigma04}. 
\begin{figure}[htbp]
\centering\includegraphics[width=12cm]{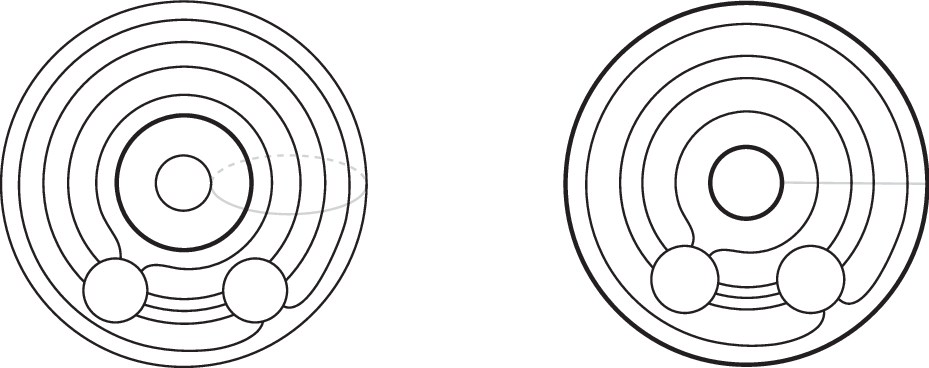}
\begin{picture}(400,0)(0,0)
\put(64,39){$m_1^+$}
\put(115,39){$m_1^-$}
\put(87,61.5){$C_{T_1}$}
\put(87,0){$\Sigma'$}

\put(294,78){$C_{T_1}^+$}
\put(217,78){$C_{T_1}^-$}
\put(272,43){$m_1^+$}
\put(318,43){$m_1^-$}
\put(297,0){$\Sigma''$}
\end{picture}
\caption{The two-holed torus $\Sigma'$ (left) and the four-holed sphere $\Sigma''$ (right).}
\label{figure:sigma04}
\end{figure}
We further cut $\Sigma'$ along $C_{T_1}$ to obtain a sphere $\Sigma''$ with four holes. 
The two new boundary components of $\Sigma''$ are denoted by $C_{T_1}^\pm$, 
see the right in Figure~\ref{figure:sigma04}. 
Now, $\Sigma'' \cap \partial D$ consists of arcs connecting $C_{T_1}^{+}$ and $C_{T_1}^{-}$. 
From this, we see that there exist at most two canceling disks for $C_{T_1}$ in $\vtil$ that intersect $m_2$ minimally.  
\end{proof}

\begin{lemma}\label{uniqueE_1}
Let $(M, K; \Sigma)$ be a $(1, 1)$-decomposition of a knot $K$ in a closed orientable $3$-manifold $M$. 
Let $m_1 \cap m_2 \neq \emptyset$. 
If there exists a unique canceling disk for $C_{T_1}$ in $\vtil$ having minimal intersection with $m_2$, 
then the canceling disk for $C_{T_1}$ in $\vtil$ having the second least intersection with $m_2$ is also unique. 
\end{lemma}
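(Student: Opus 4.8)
The plan is to continue the cut-surface analysis behind Lemma~\ref{m1m2lem} one level deeper, and to bootstrap from the hypothesis that the minimizing canceling disk is unique. Write $E_1$ for the unique canceling disk for $C_{T_1}$ in $\vtil$ with $|\partial E_1\cap m_2|$ minimal, put $k_0=|\partial E_1\cap m_2|$, and let $k_1$ be the second-smallest value attained by $|\partial D\cap m_2|$ as $D$ ranges over all canceling disks for $C_{T_1}$ in $\vtil$; this value exists because the values are non-negative integers and there are infinitely many such disks. We must show that $k_1$ is attained by a single canceling disk.

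As in the proof of Lemma~\ref{m1m2lem}, I would cut $\partial\vtil$ along $m_1$ to obtain the two-holed torus $\Sigma'$, with new boundary components $m_1^{\pm}$, and then along $C_{T_1}$ to obtain the four-holed sphere $\Sigma''$, with the additional boundary components $C_{T_1}^{\pm}$, putting $m_1$, $m_2$, $C_{T_1}$, and all the boundary curves $\partial D$ in play in simultaneous minimal position. In $\Sigma''$ the curve $m_2$ restricts to a fixed arc system $\omega:=m_2\cap\Sigma''$ with endpoints on $m_1^{\pm}$, and each $|\partial D\cap m_2|$ is governed, exactly as in Lemma~\ref{m1m2lem}, by how the arcs of $\partial D\cap\Sigma''$ — in particular the one carrying the single point $\partial D\cap C_{T_1}$ and joining $C_{T_1}^{+}$ to $C_{T_1}^{-}$ — meet $\omega$. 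I would also use that the hyperelliptic involution $\alpha$ fixes the isotopy class of every essential curve on $\partial\vtil$, so it permutes the canceling disks while preserving $|\partial D\cap m_2|$; in particular $\alpha$ fixes $E_1$.

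For the core step, assume for contradiction that two distinct canceling disks $E'$ and $E''$ each realize $k_1$. Applying outermost-arc surgeries among $E_1$, $E'$, $E''$ — each time one of the two disks produced being again a canceling disk, by the mechanism in the proof of Lemma~\ref{cancelingkashuku}, and, since $m_2$ is in minimal position with everything in sight, having intersection with $m_2$ no larger than the disks it came from, with equality only when the relevant outermost region misses $m_2$ (so that the surgery can be continued along another intersection arc), and using Lemma~\ref{canceling1} to rule out the degenerate case in which $E_1$, $E'$, $E''$ are pairwise disjoint — I would produce a canceling disk $D^{*}$ with $|\partial D^{*}\cap m_2|<k_1$. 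By the uniqueness hypothesis this forces $|\partial D^{*}\cap m_2|=k_0$ and $D^{*}=E_1$. Tracing the surgeries backwards then pins the positions of $\partial E'$ and $\partial E''$ in $\Sigma''$: each is obtained from $\partial E_1$ by one of the two elementary moves across the complementary regions of $\partial E_1$ incident to $C_{T_1}^{+}$ and $C_{T_1}^{-}$, and a direct comparison of their intersection numbers with $\omega$ against that of $\partial E_1$ — using $\omega\ne\emptyset$, which holds because $m_1\cap m_2\ne\emptyset$, together with the uniqueness of $E_1$ — shows that $|\partial E'\cap m_2|\ne|\partial E''\cap m_2|$ unless $E'=E''$. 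This is the desired contradiction. (Equivalently, one may regard the canceling disks with intersection $\le k_1$ as spanning a subtree of $\mathcal{CD}(\vtil,C_{T_1})$ and re-run the arc count of Lemma~\ref{m1m2lem} one level down to see that this subtree has at most two vertices, one of which is $E_1$.)

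The step I expect to be the main obstacle is the surgery bookkeeping in the core step: verifying that each surgery weakly decreases intersection with $m_2$, that the intermediate disks can be kept disjoint from $\partial E_1$ where that is needed, and that the procedure terminates strictly below $k_1$ rather than stalling; and, in parallel, correctly accounting for the auxiliary arcs of $\partial D$ with endpoints on $m_1^{\pm}$ in $\Sigma''$, which are invisible to the clean ``arc from $C_{T_1}^{+}$ to $C_{T_1}^{-}$'' description but still contribute to $|\partial D\cap m_2|$. Once the configuration has been confined to the four-holed sphere $\Sigma''$, the remaining comparison is routine, since arc systems in a four-holed sphere are controlled by the Farey graph, along whose geodesics ``intersection number with $\omega$'' is convex, so that uniqueness of the minimum propagates to the second value.
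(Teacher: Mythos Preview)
The paper disposes of this lemma in one sentence: ``This follows from the arguments of Lemma~\ref{m1m2lem} and Figure~\ref{figure:sigma04}.'' The substance is that in the four-holed sphere $\Sigma''$ every canceling-disk boundary becomes a \emph{single} arc from $C_{T_1}^{+}$ to $C_{T_1}^{-}$, and the same direct reading of intersection numbers with the fixed arc system $m_2\cap\Sigma''$ that yields ``at most two minimizers'' in Lemma~\ref{m1m2lem} also yields ``unique minimizer $\Rightarrow$ unique second value'' here. In particular there are no ``auxiliary arcs of $\partial D$ with endpoints on $m_1^{\pm}$'': since $\MCG(\vtil,C_{T_1})$ acts transitively on canceling disks and preserves the isotopy class of $m_1$, every canceling disk satisfies $\iota(\partial D,m_1)=0$, so $\partial D$ contributes exactly one arc to $\Sigma''$. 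That removes one of your two flagged obstacles outright.

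The other obstacle --- the surgery bookkeeping --- is a genuine gap in your main line, not merely a detail to be filled in. When you surger $E$ along an outermost subdisk $C\subset D$, the resulting disks have boundary $e_i\cup c$, where $e_1\cup e_2=\partial E\cap\partial\vtil$ and $c=\partial C\cap\partial\vtil\subset\partial D$, so $|\partial F_i\cap m_2|=|e_i\cap m_2|+|c\cap m_2|$. Nothing forces $|c\cap m_2|$ to be small, and you do not get to choose which of $F_1,F_2$ is the canceling disk; hence surgery need not weakly decrease intersection with $m_2$, and the descent to a disk $D^{*}$ with $|\partial D^{*}\cap m_2|<k_1$ is unjustified. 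Your parenthetical alternative --- regard the canceling disks with intersection $\le k_1$ as spanning a subtree and re-run the arc count of Lemma~\ref{m1m2lem} one level down --- together with your closing convexity remark, \emph{is} essentially the paper's argument; you should lead with that and drop the surgery apparatus entirely.
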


\begin{proof}
This follows from the arguments of Lemma~\ref{m1m2lem} and Figure~\ref{figure:sigma04}. 
\end{proof}

\begin{lemma}\label{preservem2}
Let $(M, K; \Sigma)$ be a $(1, 1)$-decomposition of a knot $K$ in a closed orientable $3$-manifold $M$. 
Then, for an element $\varphi$ of $\MCG(V_1, T_1)$, 
we have $\varphi (m_2) = m_2$ if and only if $\varphi \in \mathcal{G}(M, K; \Sigma)$. 
\end{lemma}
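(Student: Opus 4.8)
The plan is to prove the equivalence by unwinding the definition of the Goeritz group as an intersection of mapping class groups inside $\MCG(\Sigma, \Sigma \cap K)$. Recall from the Preliminaries that $\mathcal{G}(M, K; \Sigma) = \MCG(V_1, T_1) \cap \MCG(V_2, T_2)$, where all groups are regarded as subgroups of $\MCG(\Sigma, \Sigma \cap K)$ via restriction. So, given $\varphi \in \MCG(V_1, T_1)$, membership in $\mathcal{G}(M, K; \Sigma)$ is exactly the condition that $\varphi$ (restricted to $\Sigma$) extends to an orientation-preserving homeomorphism of the pair $(V_2, T_2)$. The strategy is to show that this extendability is detected entirely by whether $\varphi$ preserves the single curve $m_2 \subset \Sigma$.

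For the ``only if'' direction: if $\varphi \in \mathcal{G}(M, K; \Sigma)$, then $\varphi$ restricts to an element of $\MCG(V_2, T_2) \cong \MCG(\widetilde{V}_2, C_{T_2})$. As noted in the running conventions (and by Corollary~2.2 of Funayoshi--Koda, cited in Section~\ref{sec: Mapping class groups of a trivial 1-tangle in a solid torus}), the curve $m_2$ is the boundary of the \emph{unique} non-separating disk in $\widetilde{V}_2$ disjoint from $C_{T_2}$, hence it is canonically determined by the pair $(V_2, T_2)$. Therefore every element of $\MCG(V_2, T_2)$ preserves $m_2$ up to isotopy, and in particular $\varphi(m_2) = m_2$.

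For the ``if'' direction: suppose $\varphi \in \MCG(V_1, T_1)$ satisfies $\varphi(m_2) = m_2$. I want to produce an extension of $\varphi|_\Sigma$ over $(V_2, T_2)$. Since $\varphi$ preserves $m_2$, which bounds the meridian disk $D_2$ of the solid torus $V_2$ disjoint from $T_2$, the restriction $\varphi|_\Sigma$ preserves the pair $(\Sigma, m_2)$; cutting $V_2$ along $D_2$ produces a $3$-ball $B$ containing the trivial arc $T_2$ together with a distinguished disk pattern on $\partial B$, and the condition that $\varphi$ fixes $m_2$ (together with $\varphi$ being orientation-preserving and preserving the marked points $\partial T_2$) means $\varphi|_\Sigma$ descends to a homeomorphism of $\partial B$ respecting this structure. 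The mapping class group of the $3$-ball containing a single trivial arc, relative to such boundary data, is well understood, and any boundary homeomorphism of the required type extends over $(B, T_2)$; regluing along $D_2$ gives the desired extension over $(V_2, T_2)$. Hence $\varphi \in \MCG(V_2, T_2)$ as well, so $\varphi \in \mathcal{G}(M, K; \Sigma)$.

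The main obstacle is the ``if'' direction: one must argue carefully that preserving the single curve $m_2$ is genuinely sufficient for extendability over $(V_2, T_2)$, i.e.\ that no further obstruction hides in how $\varphi|_\Sigma$ acts on the rest of $\Sigma$ relative to the $1$-tangle $T_2$. The clean way to handle this is to reduce, via cutting along $D_2$, to the statement that the relevant boundary mapping classes of a trivially-tangled $3$-ball all extend inward, which is essentially the genus-$0$, $n=1$ case and can be treated by an elementary disk-swapping or Alexander-trick argument; one should also check that the isotopy realizing $\varphi(m_2)=m_2$ can be taken to preserve $\Sigma \cap K$, but this is immediate since $m_2$ is disjoint from the marked points.
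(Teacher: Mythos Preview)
Your proof is correct and follows essentially the same approach as the paper. Both directions match: the ``only if'' uses the uniqueness of $m_2$, and the ``if'' extends $\varphi$ over the meridian disk $D_{m_2}$ and then over the remaining $3$-ball via Alexander's trick, with the uniqueness of the trivial $1$-tangle in a ball handling the arc $T_2$. The only cosmetic difference is that the paper phrases the extension from the $V_1$ side (attach a $2$-handle along $m_2$ to $V_1$, then fill the complementary $3$-ball in $M$), whereas you phrase it from the $V_2$ side (cut $V_2$ along $D_2$ and extend over the resulting ball); these are dual descriptions of the same construction, and the paper's version avoids having to discuss how the cut-open map reglues.
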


\begin{proof}
From the uniqueness of $m_2$, we have 
$\varphi (m_2) = m_2$ for any $\varphi \in \mathcal{G}(M, K; \Sigma)$. 
Suppose that $\varphi \in \MCG(V_1, T_1)$ satisfies $\varphi (m_2) = m_2$.  
Let $D_{m_2}$ be a disk in $V_2$ bounded by $m_2$ and disjoint from $T_2$. 
By assumption, $\varphi$ can be extended to a homomorphism of $V_1 \cup \Nbd(D_{m_2})$. 
Since $M - (V_1 \cup \Nbd(D_{m_2}))$ is a $3$-ball, we can extend $\varphi$ further to a homomorphism of 
all of $M$. 
Here, we can assume that this extended $\varphi$ preserves $T_2$ by the uniqueness of the trivial tangle up to isotopy, 
which implies $\varphi \in \mathcal{G}(M, K; \Sigma)$.
\end{proof}

By virtue of Lemma~\ref{preservem2}, in order to check whether an element of $\MCG(V_1, T_1)$ 
is contained in the Goeritz group $\mathcal{G}(M, K; \Sigma)$, we only need to check whether that map preserves $m_2$.
In other words, we have the following: 
\[\mathcal{G}(M, K; \Sigma) = \MCG(V_1, T_1, m_2) < \MCG(V_1, T_1).\]

Given a fixed pair $\{D, E\}$ of disjoint, non-parallel canceling disks for $C_{T_1}$ in $\vtil$, 
let $\alpha$, $\beta_D$, $\tau$, $\gamma_{\{D, E\}}$ in $\MCG(\vtil, C_{T_1})$ (or $\MCG(V_1, T_1)$) 
be the maps defined in Section \ref{sec: Mapping class groups of a trivial 1-tangle in a solid torus}. 
Note that $\alpha$ and $\tau$ are determined independently of the choice of $\{ D, E \}$. 
Moreover, the map $\alpha$, which is the hyperelliptic involution of $\partial \vtil$, does not even depend on the 
$(1,1)$-decompositions $(M, K; \Sigma)$. 
Since the hyperbolic involution $ \alpha $ 
is a central element of the mapping class group of the closed orientable surface of genus $2$, 
it preserves every simple closed curve on the surface (see~\cite[Section 3.4]{FM12}). 
Therefore,  
we always have $\alpha \in \mathcal{G}(M, K; \Sigma)$ by Lemma ~\ref{preservem2}. 

Let $(L(p, q), K_{p/q}; \Sigma_{p/q})$ be the $(1,1)$-decomposition introduced at the beginning of this section, where we recall that 
$p$ is assumed to be a non-zero even number. 
By definition, there exist canceling disks $D$, $E$ for $T_1$ in $V_1$ such that 
$\gamma_{\{D, E\}} \in \mathcal{G}(L(p, q), K_{p/q}; \Sigma_{p/q})$. 
In fact, let $f$ be the universal covering map as in that definition, and 
let $D$ and $E$ be disks in $V_1$ bounded by $T_1 \cup f([0, \frac{1}{2}] \times \{0\})$ and $T_1 \cup f([\frac{1}{2}, 1] \times \{0\})$, respectively. 
Then, since $p$ is even, we have $\gamma_{\{D, E\}}(m_2) = m_2$, as illustrated in Figure~\ref{figure:Kpqgamma}. 
\begin{figure}[htbp]
\centering\includegraphics[width=13.2cm]{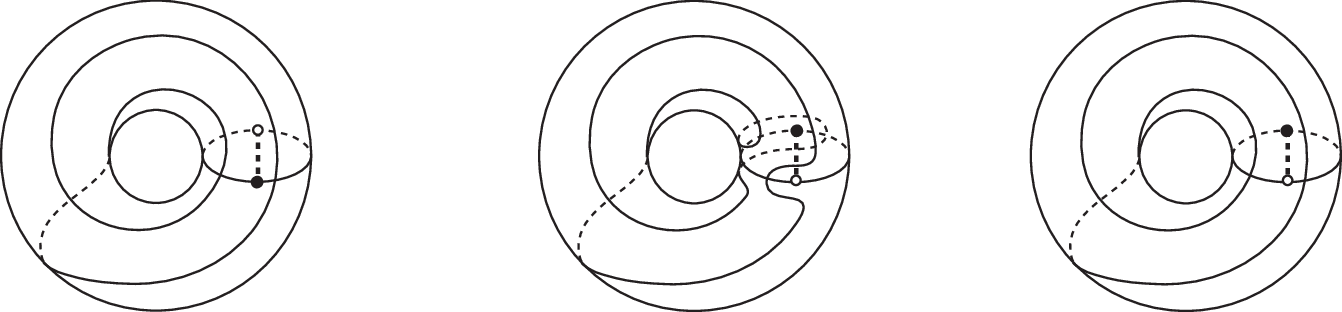}
\begin{picture}(400,0)(0,0)
\put(103,55){$m$}
\put(10,20){$m_2$}
\put(53,0){$V_1$}

\put(119,55){$\xrightarrow{\gamma_{\{D,E\}}}$}

\put(253,55){$m$}
\put(118,20){$\gamma_{\{ D,E \}}(m_2)$}
\put(202,0){$V_1$}

\put(272,55){$\approx$}

\put(391,55){$m$}
\put(297,20){$m_2$}
\put(340,0){$V_1$}

\end{picture}
\caption{The map $\gamma_{\{D,E\}}$ for $(L(p, q), K_{p/q}; \Sigma_{p/q})$.}
\label{figure:Kpqgamma}
\end{figure}
In fact, $\gamma_{\{D, E\}}$ lifts to the Euclidean transformation $(x, y) \mapsto (x + 1/2 , y)$ of $\RR^2$, which preserves 
$f^{-1} (f (L_{p/q}))$ as a set. 
Thus, by Lemma~\ref{preservem2}, we have $\gamma_{\{D, E\}} \in \mathcal{G}(L(p, q), K_{p/q}; \Sigma_{p/q})$. 
Here we note that $D$ and $E$ are actually 
 the two canceling disks for $T_1$ in $V_1$ that intersect $m_2$ minimally.

\begin{lemma}\label{coredouti}
Let $(M, K; \Sigma)$ be a $(1, 1)$-decomposition of a knot $K$ in a closed orientable $3$-manifold $M$. 
Then, we have both $m_1 \cap m_2 = \emptyset$ and $m_1 \neq m_2$ if and only if 
$M = S^2 \times S^1$ and $K$ is the core knot. 
\end{lemma}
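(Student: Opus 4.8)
The plan is to analyze the two meridians $m_1 \subset \partial\widetilde V_1$ and $m_2 \subset \partial\widetilde V_2$ directly on the Heegaard torus $\Sigma_{1,2}$, viewing each $m_i$ as the boundary of the unique meridian disk of the solid torus $V_i$ disjoint from $T_i$. Since $V_1 \cup_\Sigma V_2$ is a genus-$1$ Heegaard splitting, the manifold $M$ is determined by the pair of slopes $(m_1, m_2)$ on $\Sigma$, and the knot $K$ is built from the two trivial tangles $T_1, T_2$. The hypothesis $m_1 \cap m_2 = \emptyset$ together with $m_1 \neq m_2$ forces $m_1$ and $m_2$ to be two disjoint, non-isotopic simple closed curves on the once-punctured-at-two-points torus — but on a genus-$1$ surface, two disjoint essential simple closed curves are isotopic as curves in $\Sigma_{1,0}$; the only way they can be non-isotopic in $\Sigma_{1,2}$ is that the two marked points $\partial T_1 \cap \Sigma$ lie on opposite sides of $m_1 \cup m_2$. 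The first step is to make this picture precise: cut $\Sigma$ along $m_1$ and $m_2$ to get two annuli, each containing exactly one marked point (equivalently one point of $\partial K$), and observe that the gluing must identify these annuli so that $M$ is the union of two solid tori glued along a common meridian, i.e.\ $M = S^2 \times S^1$.

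Next I would pin down the knot. Because each $V_i$ meets $K$ in a single trivial arc $T_i$ with a canceling disk, and the disjointness of $m_1$ from $m_2$ means the canceling disk for $T_1$ can be chosen disjoint from $m_2$ (and vice versa), one checks that $K$ is isotopic into $\Sigma$ and in fact is isotopic to the core of one (hence either) of the two solid-torus summands of $S^2 \times S^1$ — so $E(K)$ is a solid torus and $K$ is the core knot. Conversely, for the explicit $(1,1)$-decomposition of the core knot of $S^2 \times S^1$ one exhibits meridians $m_1, m_2$ that are parallel in $\Sigma_{1,0}$ but separated by the two marked points, giving $m_1 \cap m_2 = \emptyset$ and $m_1 \neq m_2$ in $\Sigma_{1,2}$; uniqueness of the $(1,1)$-decomposition of a torus knot (here the core knot is a torus knot) up to equivalence, cited earlier, shows every $(1,1)$-decomposition of the core knot in $S^2 \times S^1$ has this property. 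This handles both directions.

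An alternative, and perhaps cleaner, route for the forward direction is to invoke Theorem~\ref{kyoridouti}: the distance $d(M,K;\Sigma)$ is realized by curves in $\mathcal D(V_1 - K)$ and $\mathcal D(V_2 - K)$, and $m_1, m_2$ represent vertices in $\mathcal D(V_1 - K)$ and $\mathcal D(V_2 - K)$ respectively; the conditions $m_1 \cap m_2 = \emptyset$ and $m_1 \neq m_2$ say exactly that these two vertices are adjacent in $\mathcal C(\Sigma_{1,2})$, so $d(M,K;\Sigma) = 1$ (it is not $0$: if it were, $K$ would be trivial and then $m_1 = m_2$, contradicting the hypothesis, as one verifies from the standard $(1,1)$-picture of the trivial knot). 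Then Saito's classification immediately gives $M = S^2 \times S^1$ and $K$ the core knot. For the converse, the core knot of $S^2 \times S^1$ has distance exactly $1$ by the same theorem, and one extracts disjoint non-isotopic $m_1, m_2$ from a pair of adjacent distance-realizing disks — or just exhibits them directly in the model.

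The main obstacle I anticipate is the converse direction done honestly: showing that \emph{every} $(1,1)$-decomposition of the core knot in $S^2 \times S^1$ yields $m_1 \cap m_2 = \emptyset$ with $m_1 \neq m_2$, rather than merely exhibiting one such decomposition. This is where uniqueness up to equivalence of the $(1,1)$-decomposition of a torus knot (Theorem of Morimoto and Kobayashi--Saeki, quoted above) must be used carefully: equivalence is realized by an orientation-preserving homeomorphism of $M$ carrying one decomposition to the other, and one must check such a homeomorphism carries the pair $(m_1, m_2)$ of one to the pair of the other, so that the combinatorial relation between $m_1$ and $m_2$ is an invariant of the equivalence class. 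Verifying $m_1 \neq m_2$ in the trivial-knot case (to rule out $d = 0$ in the distance argument) is routine but should be stated explicitly, since it is exactly the content of part (1) of Theorem~\ref{kyoridouti} combined with the observation that for the trivial knot the two meridian disks coincide.
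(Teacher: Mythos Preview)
Your ``alternative route'' for the forward direction is exactly what the paper does: observe that $m_i \in \mathcal{D}(V_i - T_i)$, so disjointness and distinctness give $d(M,K;\Sigma) \leq 1$; rule out $d = 0$ by noting that the trivial knot in $S^2 \times S^1$ has the double of $(V_1,T_1)$ as its unique $(1,1)$-decomposition, forcing $m_1 = m_2$; then invoke Saito's classification.

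The converse direction is where you diverge. You propose to exhibit the property in an explicit model of the core knot and then transport it to an arbitrary $(1,1)$-decomposition via the Morimoto/Kobayashi--Saeki uniqueness theorem for torus knots. This works, and your obstacle paragraph correctly identifies what needs checking. The paper takes a sharper route: it proves directly that if $d(M,K;\Sigma) = 1$, then any distance-realizing pair $(c_1,c_2)$ must equal $(m_1,m_2)$. The argument is that if, say, $c_2 \neq m_2$ then $c_2$ is separating (since $m_2$ is the unique non-separating disk boundary in $\widetilde V_2$ missing $C_{T_2}$), hence $c_1$ is non-separating to avoid $c_1 = c_2$, and then Saito's Lemma~3.3 shows $c_2$ also bounds a disk in $V_1 - T_1$, forcing $d = 0$. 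This avoids any appeal to global uniqueness of the decomposition and stays entirely within the curve-complex picture; your approach, by contrast, trades this local disk-complex lemma for the heavier uniqueness theorem you already have on the shelf. Note that your throwaway remark ``one extracts disjoint non-isotopic $m_1, m_2$ from a pair of adjacent distance-realizing disks'' is not automatic---it is precisely the content of the paper's argument via Saito's Lemma~3.3---so without your model-plus-uniqueness fallback that sentence would be a gap.
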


\begin{proof}
By Theorem~\ref{kyoridouti} (2), it suffices to show that 
we have both $m_1 \cap m_2 = \emptyset$ and $m_1 \neq m_2$ if and only if $d(M, K; \Sigma) = 1$. 

Suppose that $m_1 \cap m_2 = \emptyset$ and $m_1 \neq m_2$. 
Since $m_1$ and $m_2$ are disjoint, the ambient $3$-manifold $M$ is $S^2 \times S^1$. 
Further, since $m_i \in \mathcal{D}(V_i - T_i)$, we have $d(M, K; \Sigma) \leq 1$. 
If $d(M, K; \Sigma) = 0$, $K$ is the trivial knot by Theorem~\ref{kyoridouti} (1). 
This is, however, impossible because if $K$ is the trivial knot in $S^2 \times S^1$, its unique $(1,1)$-decomposition is 
the one obtained by the double of the trivial $1$-tangle in the solid torus, which implies that 
$m_1 = m_2$. 

Suppose $d(M, K; \Sigma) = 1$. 
Then there exists a simple closed curve 
$c_i \in \mathcal{D}(V_i - T_i)$ for each $i = 1, 2$ such that $c_1 \neq c_2$ and $c_1 \cap c_2 = \emptyset$. 
We show that $c_1=m_1$ and $c_2=m_2$. 
We assume, for a contradiction, that $c_2 \neq m_2$. 
Recall that $m_2$ is the boundary of a 
unique non-separating disk in $V_2$ disjoint from $T_2$. 
Thus, $c_2$ is separating. 
It follows that $c_1$ is non-separating, otherwise we have $c_1 = c_2$, a contradiction. 
Then, by  Saito~\cite[Lemma 3.3]{Saito}, $c_2$ also bounds a (separating) disk in $V_1$ disjoint from $T_1$. 
This implies that $d(M, K; \Sigma) = 0$, which is a contradiction. 
Consequently, we have $c_2 = m_2$. 
The proof of $c_1=m_1$ is symmetric to the one above. 
\end{proof}

For oriented simple closed curves $c_1$, $c_2$ in a closed oriented surface $\Sigma$, 
we denote by $\hat{\iota}(c_1, c_2)$ and $\iota (c_1, c_2)$ 
the algebraic and geometric intersection numbers, respectively. 
Recall that the latter is defined by  
\[ \iota (c_1, c_2) = \min \{ | c_1' \cap c_2' | \mid \mbox{$c_i'$ is isotopic to $c_i$} \}, \]
and thus, it does not depend on the orientations. 
By Farb--Margalit~\cite[Proposition~3.2]{FM12}, we have the equality 
$\iota(c_2, \tau_{c_1}^{\,k}(c_2)) = |k| \iota(c_1, c_2)^2$, 
where $\tau_{c_1}$ denotes the Dehn twist along $c_1$. 
Thus we have, in particular, the following. 

\begin{lemma}\label{kikakoutensu}
Let $n \in \ZZ - \{ 0 \}$. 
For a simple closed curve $c$ in $\partial \vtil$ with $c \cap m_1 \neq \emptyset$, 
we have $\iota(c, \tau^n(c)) \neq 0$, thus, $ c \neq \tau^n(c) $. 
\end{lemma}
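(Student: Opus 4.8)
The plan is to deduce Lemma~\ref{kikakoutensu} directly from the intersection-number formula for powers of Dehn twists recalled just above, using the fact that $\tau$ is (an extension of) the Dehn twist along $m_1$. Concretely, $\tau|_{\partial\vtil} = \tau_{m_1}$, so for any simple closed curve $c$ in $\partial\vtil$ and any $n \in \ZZ$ we have, by Farb--Margalit~\cite[Proposition~3.2]{FM12}, the equality
\[
\iota\bigl(c, \tau^n(c)\bigr) = \iota\bigl(c, \tau_{m_1}^{\,n}(c)\bigr) = |n|\,\iota(m_1, c)^2 .
\]

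Given the hypotheses $n \neq 0$ and $c \cap m_1 \neq \emptyset$, the second condition says precisely that $\iota(m_1, c) \geq 1$ (recall that in this paper curves are always assumed to be in minimal position, so a nonempty intersection means a nonzero geometric intersection number). Hence the right-hand side $|n|\,\iota(m_1,c)^2$ is a positive integer, so $\iota(c, \tau^n(c)) \neq 0$. In particular $c$ and $\tau^n(c)$ cannot have disjoint representatives, so they are not isotopic, i.e.\ $c \neq \tau^n(c)$ as isotopy classes; this gives the ``thus'' clause.

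The only point requiring a word of care is the passage from ``$\tau$ is an extension of the Dehn twist along $m_1$'' to ``$\tau$ acts on $\partial\vtil$ exactly as $\tau_{m_1}$'': here one uses that $\tau$ is defined precisely as the mapping class of $\vtil$ whose restriction to the boundary surface $\partial\vtil$ is $\tau_{m_1}$ (see the definition in Section~\ref{sec: Mapping class groups of a trivial 1-tangle in a solid torus} and Figure~\ref{figure:generators}), and that $m_1$ is an essential curve in $\partial\vtil$ so that the formula of~\cite[Proposition~3.2]{FM12} applies verbatim. I expect no genuine obstacle: this lemma is explicitly flagged in the text as an immediate consequence (``Thus we have, in particular, the following'') of the displayed intersection formula, so the proof is essentially a one-line specialization once the identification $\tau|_{\partial\vtil} = \tau_{m_1}$ is recorded.
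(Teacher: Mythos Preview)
Your argument is correct and matches the paper's approach exactly: the lemma is presented there as an immediate specialization of the displayed formula $\iota(c_2,\tau_{c_1}^{\,k}(c_2))=|k|\,\iota(c_1,c_2)^2$ from \cite[Proposition~3.2]{FM12}, taking $c_1=m_1$, $c_2=c$, $k=n$, and using the paper's standing convention that curves are in minimal position so that $c\cap m_1\neq\emptyset$ forces $\iota(m_1,c)\geq 1$.
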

The point here is the so-called \emph{bigon criterion}, which asserts that 
two simple closed curves in $\Sigma$ intersects minimally if and only if they do not form a bigon. 
Using this criterion, we can also prove the following lemma with nearly identical arguments. 

\begin{lemma}\label{kikakoutensu-half-twist}
Let $n \in \ZZ - \{ 0 \}$. 
For a simple closed curve $c$ in $\partial \vtil$ with $c \cap \ell_D \neq \emptyset$, 
we have $\iota(c, \beta_D^n(c)) \neq 0$, thus, $c \neq \beta_D^n(c)$. 
\end{lemma}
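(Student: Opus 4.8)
The plan is to mimic the proof strategy that the paper says works for Lemma~\ref{kikakoutensu}, namely to invoke the formula $\iota(c_2, \tau_{c_1}^{\,k}(c_2)) = |k|\,\iota(c_1, c_2)^2$ from Farb--Margalit together with the bigon criterion. The key point is that $\beta_D$ is \emph{not} a Dehn twist but a \emph{half-twist} along $\ell_D$, so the cited formula does not apply verbatim. However, $\beta_D^2$ is the Dehn twist $\tau_{\ell_D}$ along the (separating) curve $\ell_D$, as already observed in the proof of Lemma~\ref{G_D}. So the natural approach is: first reduce to the case of $\beta_D^{2k}$ via the square, and then run the intersection-number argument for that even case; the odd case then follows because if $c = \beta_D^{n}(c)$ with $n$ odd, then applying $\beta_D^n$ again gives $c = \beta_D^{2n}(c)$, contradicting the even case.

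The key steps, in order, would be as follows. First, observe that $\beta_D^{2}$ equals the Dehn twist $\tau_{\ell_D}$ along the separating simple closed curve $\ell_D = \partial(\Nbd(\partial D \cup C_{T_1}; \partial\vtil))$ in $\partial\vtil$; this was noted in the proof of Lemma~\ref{G_D}. Second, note that the hypothesis $c \cap \ell_D \neq \emptyset$ — meaning $c$ and $\ell_D$ intersect minimally in at least one point, i.e.\ $\iota(c, \ell_D) > 0$ — gives, via Farb--Margalit~\cite[Proposition~3.2]{FM12}, that $\iota(c, \tau_{\ell_D}^{\,k}(c)) = |k|\,\iota(c, \ell_D)^2 > 0$ for every nonzero integer $k$; in particular $\iota(c, \beta_D^{2k}(c)) \neq 0$ and so $c \neq \beta_D^{2k}(c)$ whenever $k \neq 0$. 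Third, handle an odd exponent $n = 2k+1$: if we had $c = \beta_D^{n}(c)$, then $c = \beta_D^{n}(c) = \beta_D^{n}(\beta_D^{n}(c)) = \beta_D^{2n}(c)$, and since $2n \neq 0$ this contradicts the even case just established. Hence $c \neq \beta_D^{n}(c)$ for all $n \in \ZZ - \{0\}$, and consequently $\iota(c, \beta_D^{n}(c)) \neq 0$ (the stronger conclusion can also be extracted directly in the odd case by a bigon-criterion argument parallel to the one the paper alludes to for Lemma~\ref{kikakoutensu}, counting intersections with $\ell_D$ after applying the half-twist).

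One subtlety to address carefully is that the paper states the conclusion in the stronger form $\iota(c, \beta_D^{n}(c)) \neq 0$, not merely $c \neq \beta_D^{n}(c)$, so for odd $n$ one should not be content with the "square" trick alone if one wants the intersection-number statement literally. The cleanest fix is to argue with the bigon criterion directly: after an isotopy one may assume $c$ meets $\ell_D$ minimally; the half-twist $\beta_D$ then drags the $\iota(c,\ell_D)$ intersection points with $\ell_D$ around, and one checks that $c$ and $\beta_D^{n}(c)$ cannot be made disjoint by counting their common intersections with $\ell_D$ — this is the routine "nearly identical" verification the paper refers to. The main obstacle is therefore bookkeeping rather than any conceptual difficulty: one must set up the half-twist explicitly enough (e.g.\ by passing to the double branched cover, where $\beta_D$ lifts to a Dehn twist, or by the standard picture of a half-twist along a pair of curves forming $\ell_D$) so that the intersection count with $\ell_D$ can be made rigorous for the odd powers. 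Everything else is a direct transcription of the $\tau$-case.
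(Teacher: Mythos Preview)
Your proposal is correct, and you have correctly identified its own weak spot. The paper does not give an explicit proof; it simply says that the bigon criterion yields the result by ``nearly identical arguments'' to the Dehn-twist case, i.e.\ one adapts the standard computation behind $\iota(c,\tau_{c_1}^k(c)) = |k|\,\iota(c,c_1)^2$ directly to the half-twist $\beta_D$ supported in the subsurface bounded by $\ell_D$, for all $n$ at once.

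Your route is slightly different: you split into even and odd exponents, dispatch the even case via $\beta_D^{2} = \tau_{\ell_D}$ and the cited formula, and then use the squaring trick for odd $n$. This cleanly gives $c \neq \beta_D^{n}(c)$, which is in fact all that is used later (in Case~1-(a)-(ii)). As you yourself note, the word ``consequently'' in your third step is not justified: $c \neq \beta_D^{n}(c)$ does not by itself force $\iota(c,\beta_D^{n}(c)) \neq 0$, so for the literal statement of the lemma with odd $n$ you still need the direct bigon-criterion count you sketch at the end. Since that direct argument handles all $n$ uniformly, the even/odd detour buys convenience for the even case at the cost of having to do the paper's intended argument anyway for the odd one. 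Either way the proof goes through; your version is just a bit more roundabout than what the paper has in mind.
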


We are now going to see that the Goeritz group 
$\mathcal{G}(M, K; \Sigma)$ is determined according to the following cases. 

\begin{description}
\item[\textrm{Case 1}] 
$m_1 \cap m_2 \neq \emptyset$. 
\begin{enumerate}
\renewcommand{\labelenumi}{(\alph{enumi})}
\item
There exists a unique canceling disk of $C_{T_1}$ whose intersection with $m_2$ is minimal (denote this disk as $D_1$). 
\begin{enumerate}
\renewcommand{\labelenumii}{(\roman{enumii})}
\item
$D_1 \cap m_2 = \emptyset$. 
\item
$D_1 \cap m_2 \neq \emptyset$. 
\end{enumerate}

\item
There exist exactly two canceling disks of $C_{T_1}$ whose intersection with $m_2$ is minimal  (denote these disks as $D_1$ and $E_1$). 
\begin{enumerate}
\renewcommand{\labelenumii}{(\roman{enumii})}
\item
$\iota(\partial(D_1 \cup E_1), m_2) = |\hat{\iota}(\partial(D_1 \cup E_1), m_2)|$.
\item
$\iota(\partial(D_1 \cup E_1), m_2) \neq |\hat{\iota}(\partial(D_1 \cup E_1), m_2)|$.
\end{enumerate}
\end{enumerate}

\item[\textrm{Case 2}] 
$m_1 \cap m_2 = \emptyset$.
\begin{enumerate}
\renewcommand{\labelenumi}{(\alph{enumi})}
\item
$m_1 = m_2$.
\item
$m_1 \neq m_2$.
\end{enumerate}
\end{description}

In the following, we set $\beta = \beta_{D_1}$, $\gamma = \gamma_{\{D_1, E_1\}}$ for simplicity. 

First, we consider Case 1-(a). 
By Lemma ~\ref{G_D} (1), we have 
$\mathcal{G}(M, K; \Sigma) < G_{[D_1]} = \ZZ /2 \ZZ \langle \alpha \rangle \times \ZZ \langle \beta \rangle \times \ZZ \langle \tau \rangle$. 

\noindent \textit{Case} 1-(a)-(i): 
Since we always have $C_{T_1} \cap m_2 = \emptyset$ from the construction, we have $\ell_{D_1} \cap m_2 = \emptyset$ in this case. 
Thus, $\beta$ preserves $m_2$. 
By Lemma~\ref{preservem2}, this implies that $\beta \in \mathcal{G}(M, K; \Sigma)$.
Since $m_1 \cap m_2 \neq \emptyset$, we have $\tau^n(m_2) \neq m_2$ for any $n \neq 0$ by Lemma~\ref{kikakoutensu}.
This implies $\tau^n \notin \mathcal{G}(M, K; \Sigma)$ for any $n \neq 0$ again by Lemma~\ref{preservem2}. 
Therefore, we have $\mathcal{G}(M, K; \Sigma) = \ZZ / 2 \ZZ \langle \alpha \rangle \times \ZZ \langle \beta \rangle$. 

\noindent \textit{Case} 1-(a)-(ii): 
Suppose that we have $\beta^n \tau^m \in \mathcal{G}(M, K; \Sigma)$ for some $n, m \in \ZZ$. 
Let $E_1$ be the canceling disk of $C_{T_1}$ with the second least intersection with $m_2$, which is 
uniquely determined by Lemma~\ref{uniqueE_1}. 
Then, $\beta^n \tau^m$ preserves $E_1$. 
It is easy to see that any canceling disk of $C_{T_1}$ does not intersect $m_1$, thus, 
$\tau$ preserves $E_1$. 
Therefore, we have $E_1 = \beta^n \tau^m (E) = \beta^n(E_1)$,  which implies $n = 0$ by Lemma \ref{kikakoutensu-half-twist} because 
$\ell_D \cap \partial E_1 \neq \emptyset$.  
Now, we have $\tau^m \in \mathcal{G}(M, K; \Sigma)$ and thus, $\tau^m ( m_2 ) = m_2 $ by Lemma~\ref{preservem2}. 
Since $m_1 \cap m_2 \neq \emptyset$, this shows $m=0$ by Lemma~\ref{kikakoutensu}. 
In consequence, we have $\mathcal{G}(M, K; \Sigma) = \ZZ / 2 \ZZ \langle \alpha \rangle$.

Next, we consider Case 1-(b). 
By Lemma~\ref{G_D} (2), we have 
$\mathcal{G}(M, K; \Sigma) < G_{[\{ D_1, E_1 \}]} 
= \ZZ / 2 \ZZ \langle \alpha \rangle \times \ZZ \langle \gamma \rangle \times \ZZ \langle \tau \rangle$. 
Suppose that we have $\tau^{n} \gamma^m \in \mathcal{G}(M, K; \Sigma)$ for some $n \in \ZZ, m \in \{0, 1\}$. 
By taking its square, we have $\tau^{2n} \in \mathcal{G}(M, K; \Sigma)$. 
From Lemma~\ref{preservem2}, it follows that $\tau^{2n}(m_2) = m_2$, which implies $n=0$ by Lemma~\ref{kikakoutensu}. 

\noindent \textit{Case} 1-(b)-(i): 
In this case, we have $(M, K; \Sigma) = (L(p, q), K_{p/q}, \Sigma_{p/q})$ for some coprime integers $p$ and $q$, where 
$m_2$ is actually the image $f \left( L_{p/q} \right)$ given in the definition of $K_{p/q}$. 

Thus, $\gamma$ is an element of  $\mathcal{G}(M, K; \Sigma)$ and we have 
$\mathcal{G}(M, K; \Sigma) = \ZZ / 2 \ZZ \langle \alpha \rangle \times \ZZ / 2 \ZZ \langle \gamma \rangle$.

\noindent \textit{Case} 1-(b)-(ii): 
We suppose, for a contradiction, that $\gamma \in \mathcal{G}(M, K; \Sigma)$. 
Let $t$ and $t'$ be the endpoints of the tangle $T_1$.
Note that $\gamma(t) = t'$ and $\gamma(t') = t$.
Let $A$ be the annulus obtained by cutting $\partial V_1$ along  $\partial(D_1 \cup E_1)$, 
where $D_1 \cap D_2 = \partial D_1 \cap \partial D_2 = T_1$ and $D_1 \cup E_1$ is a meridian disk of $V_1$. 
Denote the two boundary components of $A$ by $b^+$ and $b^-$. 
See Figure~\ref{figure:Agamma}. 
\begin{figure}[htbp]
\centering\includegraphics[width=3.5cm]{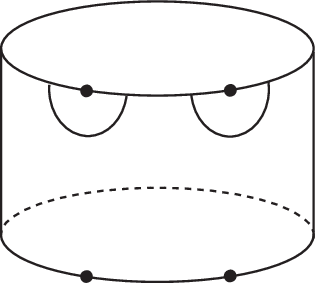}
\begin{picture}(400,0)(0,0)
\put(195,0){$A$}

\put(150,96){$b^+$}
\put(175,79){$t^+$}
\put(220,79){$t'^+$}
\put(175,50){$a$}
\put(210,48){$\gamma (a)$}

\put(150,10){$b^-$}
\put(175,2){$t^-$}
\put(220,2){$t'^-$}
\end{picture}
\caption{The annulus $A$ and its boundary circles $b^+$, $b^-$.}
\label{figure:Agamma}
\end{figure}

Let $t^+$ and $t'^+$ (resp. $t^-$ and $t'^-$) be the copies of $t$ and $t'$ on the circle $b^+$ (resp. $b^-$), respectively. 
Let $\mathcal{A}$ be the set of arcs on $A$ obtained by cutting $m_2$ along $\partial(D_1 \cup E_1)$.
Then, $\mathcal{A}$ is divided into three subsets: 
the set $\mathcal{A}_{+-}$ of arcs connecting $b^+$ and $b^-$, 
the set $\mathcal{A}_{++}$ of arcs connecting $b^+$ and $b^+$, 
and the set $\mathcal{A}_{--}$ of arcs connecting $b^-$ and $b^-$, where 
we have $ | \mathcal{A}_{++} | = | \mathcal{A}_{--} | $, because 
$| b^+ \cap m_2 | = | b^- \cap m_2 | = | \partial (D_1 \cup D_2)  \cap m_2 |$.  
By the assumption that $\iota(\partial(D_1 \cup E_1), m_2) \neq |\hat{\iota}(\partial(D_1 \cup E_1), m_2)|$, 
we have $| \mathcal{A}_{++} | \neq \emptyset$ (and thus, $| \mathcal{A}_{--} | \neq \emptyset$ as well). 
Without loss of generality, we can assume that 
there exists the innermost arc $a \in \mathcal{A}_{++}$ with respect to $t^+$. 
Since $\gamma \in \mathcal{G}(M, K; \Sigma)$, we have $\gamma(m_2) = m_2$, and thus, 
$\gamma(a)$ is also contained in $\mathcal{A}$. 
By definition, $\gamma$ maps $t$ to $t'$ without changing the orientation of the circle 
$\partial (D_1 \cup E_1)$. 
Therefore, $\gamma(a)$ is the innermost arc with respect to $t'^+$.
This contradicts the assumption that $m_2$ is a simple closed curve in $\partial V_1$. 
Therefore, $\gamma$ is not an element of the Goeritz group $\mathcal{G}(M, K; \Sigma)$. 
Consequently, we have $\mathcal{G}(M, K; \Sigma) = \ZZ / 2 \ZZ \langle \alpha \rangle$.

Finally we consider Case 2. 

\noindent \text{Case} 2-(a): 
In this case, we see that $\mathcal{G}(M, K; \Sigma) = \MCG(V_1, T_1, m_2) = \MCG(V_1, T_1)$, 
see Figure~\ref{figure:trivial_knot}.
\begin{figure}[htbp]
\centering\includegraphics[width=4.5cm]{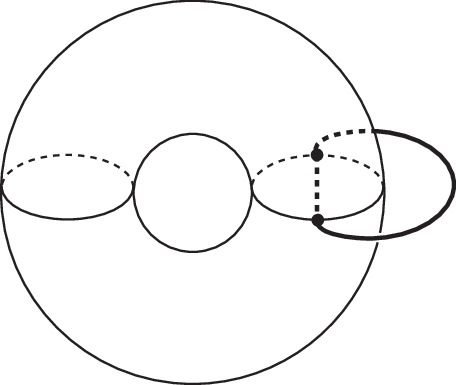}
\begin{picture}(400,0)(0,0)
\put(268,65){$K$}
\put(185,0){$V_1$}
\put(90,65){$m_1 = m_2$}

\end{picture}
\caption{The case where $m_1 \cap m_2 = \emptyset$ and $m_1 = m_2$.}
\label{figure:trivial_knot}
\end{figure}
Thus, we have $\mathcal{G}(M, K; \Sigma) = \ZZ / 2 \ZZ \langle \alpha \rangle \times \ZZ \langle \tau \rangle \times \langle \beta, \gamma \mid \gamma^2 = 1 \rangle$.

\noindent \text{Case} 2-(b): 
In this case, by Lemma 3.3 of Saito~\cite{Saito}, $m_2$ bounds a disk in $V_1$ that intersects $T_1$ transversely at a single point. 
See Figure~\ref{figure:core_knot}.
\begin{figure}[htbp]
\centering\includegraphics[width=11cm]{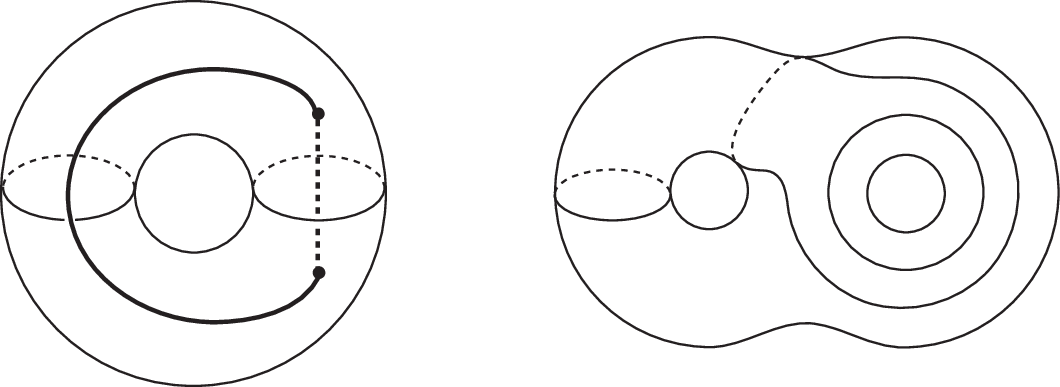}
\begin{picture}(400,0)(0,0)
\put(26,70){$m_1$}
\put(162,70){$m_2$}
\put(95,20){$K$}
\put(95,0){$V_1$}

\put(218,53){$m_1$}
\put(267,45){$m_2$}
\put(275,87){$C_{T_1}$}
\put(275,0){$\vtil$}

\end{picture}
\caption{The case where $m_1 \cap m_2 = \emptyset$ and $m_1 \neq m_2$.}
\label{figure:core_knot}
\end{figure}

In the following, we compute $\MCG(\vtil, C_{T_1}, m_2)$ instead of $\MCG(V_1, T_1, m_2)$. 
By cutting $\partial \vtil$ along $m_1  \cup C_{T_1}$, we obtain a four-holed sphere 
$\Sigma'$, whose boundary components are $m_1^{\pm}$ and $C_{T_1}^{\pm}$. 
Then, $m_2$ remains to be a simple closed curve in $\Sigma'$, and, 
without loss of generality, we can assume that it separates 
$m_1^+ \cup C_{T_1}^+$ from ${m_1}^- \cup C_{T_1}^-$. 
Let $\Sigma'_1$ and $\Sigma'_2$ be the three-holed sphere obtained by cutting $\Sigma'$ along $m_2$, 
where $\partial \Sigma'_1 = m_1^+ \cup C_{T_1}^+ \cup m_2^+$. 
Since the group $\MCG(\Sigma'_1, m_1^+, C_{T_1}^+, m_2^+)$ is trivial 
(see e.g. Farb--Margalit~\cite[Proposition~2.3]{FM12}), 
the subgroup of $\MCG(\partial \vtil, C_{T_1}, m_2)$ consisting of elements that preserve $\Sigma'_1$ and $\Sigma'_2$ 
is generated by $\tau$, $\tau_{C_{T_1}}$, $\tau_{m_2}$. 
Noting that the map $\alpha$ interchanges $\Sigma'_1$ and $\Sigma'_2$, 
we see that $\MCG(\partial \vtil, C_{T_1}, m_2)$ is generated by $\alpha$, $\tau$, $\tau_{C_{T_1}}$ and $\tau_{m_2}$. 

The generators $\alpha$, $\tau$, $\tau_{C_{T_1}}$ and $\tau_{m_2}$ commute with each other. 
Furthermore, $\alpha$ and $\tau$ are elements of $\mathcal{G}(M, K; \Sigma)$ because they 
belong to $\MCG (\vtil, C_{T_1}, m_2)$. 
By McCullough~\cite[Theorem~$1$]{Mc}, it is known that 
a Dehn twist along a simple closed curve $c$ on the boundary of a compact orientable $3$-manifold $N$ 
extends to a homeomorphisms of  $N$ if and only if $c$ bounds a disk in $N$. 
Since neither $C_{T_1}$ nor $m_2$ bound a disk in $\vtil$, neither $\tau_{C_{T_1}}$ nor $\tau_{m_2}$ 
extends to a homeomorphism of $\vtil$. 
On the other hand, the map $\tau' := \tau \tau_{C_{T_1}} \tau_{m_2}^{-1}$ extends to a homeomorphism of $\vtil$. 
In fact, we can write $\tau' = \gamma_{\{D, E\}} \, \beta_D$, where $D$ and $E$ are canceling disks of $C_{T_1}$ shown in 
~\ref{figure:core_knot_DE}. 
\begin{figure}[htbp]
\centering\includegraphics[width=6cm]{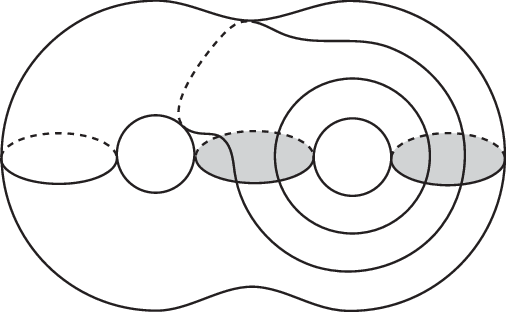}
\begin{picture}(400,0)(0,0)
\put(128,48){$m_1$}
\put(185,36){$m_2$}
\put(197,87){$C_{T_1}$}
\put(195,0){$\vtil$}
\put(167,62){$E$}
\put(288,62){$D$}
\end{picture}
\caption{The canceling disks $D$ and $E$ of $C_{T_1}$ in $\vtil$.}
\label{figure:core_knot_DE}
\end{figure}
Consequently, the Goeritz group $\mathcal{G}(M, K; \Sigma)$ has the following Smith normal form: 
$\mathcal{G}(M, K; \Sigma) = \MCG(V_1, T_1, m_2)  = \MCG(\vtil, C_{T_1}, m_2) 
= \ZZ / 2 \ZZ \langle \alpha \rangle \times \ZZ \langle \tau \rangle \times \ZZ \langle \tau' \rangle$.


\vspace{1em}

Now, we are ready to prove our main theorem. 
The proof will proceed case by case to establish the result.

\begin{proof}[Proof of Theorem $\ref{shukekka}$]

\noindent (1) This is Case~1-(a)-(i) (see Figure~\ref{figure:trivial_knot_lens}), thus we have $\mathcal{G}(M, K; \Sigma) = \ZZ / 2 \ZZ \langle \alpha \rangle \times \ZZ \langle \beta \rangle$.
\begin{figure}[htbp]
\centering\includegraphics[width=4.5cm]{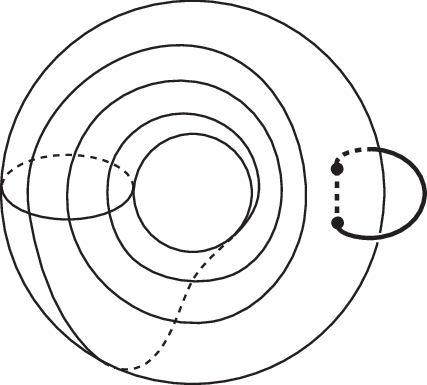}
\begin{picture}(400,0)(0,0)
\put(268,68){$K$}
\put(187,0){$V_1$}
\put(118,68){$m_1$}
\put(225,95){$m_2$}
\end{picture}
\caption{The trivial knot $K$ in $M = L(3, 1)$.}
\label{figure:trivial_knot_lens}
\end{figure}

\noindent (2) 
In this case, $(S^2 \times S^1, K)$ is the double of $(V_1, T_1)$, and thus, $m_1$ coincides with $m_2$. 
This is Case~2-(a), thus we have 
$\mathcal{G}(M, K; \Sigma) = \ZZ / 2 \ZZ \langle \alpha \rangle \times \ZZ \langle \tau \rangle \times \langle \beta, \gamma \mid \gamma^2 = 1 \rangle$.

\noindent (3) 
By Lemma~\ref{coredouti}, we have $m_1 \cap m_2 = \emptyset$ and $m_1 \neq m_2$. 
This is Case~2-(b), thus we have 
$\mathcal{G}(M, K; \Sigma) = \ZZ / 2 \ZZ \langle \alpha \rangle \times \ZZ \langle \tau \rangle \times \ZZ \langle \tau' \rangle$.

\noindent (4) 
This is Case 1-(b)-(i), thus 
we have $\mathcal{G}(M, K; \Sigma) = \ZZ / 2 \ZZ \langle \alpha \rangle \times \ZZ / 2 \ZZ \langle \gamma \rangle$.

\noindent (5) 
Suppose first that $M \neq S^2 \times S^1$. 
Then, $m_1$ must intersect $m_2$. 
If there exist exactly two canceling disks of $C_{T_1}$ whose intersection with $m_2$ is minimal, 
it is Case 1-(b). 
Moreover, since it is not (4), it must be Case 1-(b)-(ii), hence, 
we have $\mathcal{G}(M, K; \Sigma) = \ZZ / 2 \ZZ \langle \alpha \rangle$. 
Suppose that there exists a unique canceling disk $D_1$ whose intersection with $m_2$ is minimal.  
Then we can show that $D_1 \cap m_2 \neq \emptyset$. 
In fact, suppose, for a contradiction, that $D_1 \cap m_2 = \emptyset$. 
Then we have $\beta_{D_1}^2 \in \mathcal{G}(M, K; \Sigma)$ because $\beta_{D_1}^2$ preserves $m_2$. 
In other words, the Dehn twist $\beta_{D_1}^2$ along $\ell_{D_1}$ extends to a homeomorphism of $V_2$. 
Thus, by McCullough~\cite[Theorem~$1$]{Mc}, the simple closed curve $\ell_{D_1}$ bounds a (separating) disk in $V_2$. 
In this case, we can easily show that $K$ is the trivial knot, which is (1). 
Thus, $D_1$ has non-empty intersection with  $m_2$, which is Case 1-(a)-(ii). 
Therefore, we have  $\mathcal{G}(M, K; \Sigma) = \ZZ / 2 \ZZ \langle \alpha \rangle$ as well.

Next, suppose that $M = S^2 \times S^1$. 
If $m_1 \cap m_2 \neq \emptyset$, then the same argument as above shows that 
$\mathcal{G}(M, K; \Sigma) = \ZZ / 2 \ZZ \langle \alpha \rangle$. 
Then, suppose that $m_1 \cap m_2 = \emptyset$. 
When $m_1 = m_2$, $K$ is the trivial knot, which is (2), and thus this case is excluded. 
When $m_1 \neq m_2$, $K$ is the core knot, which is (3), and thus this case is also excluded. 
This completes the proof of (5). 
\end{proof}

\section*{Acknowledgments} 
The authors would like to thank the anonymous referee
for their valuable comments and suggestions, 
which significantly improved the clarity and quality of the exposition.


\end{document}